\setlist[enumerate]{label=\emph{(\roman*)}}
\newtheorem{theorem}{Theorem}[section]
\newtheorem{lemma}[theorem]{Lemma}
\newtheorem{proposition}[theorem]{Proposition}
\theoremstyle{definition}
\newtheorem{remark}[theorem]{Remark}
\numberwithin{equation}{section}
\newcommand{\R}{\mathbb{R}}
\newcommand\inner[2]{\langle #1,#2\rangle}
\newcommand\normt[1]{\left\lVert#1\right\rVert_{L^2}}
\newcommand\normo[1]{\left\lVert#1\right\rVert_{H^1}}
\newcommand\normpro[1]{\left\lVert#1\right\rVert_{E}}
\newcommand{\rd}{{\rm{d}}}
\newcommand\norm[1]{\left\lVert#1\right\rVert}
\newcommand{\pj}{\partial_{x_j}}
\numberwithin{equation}{section}
\begin{document}
	\parindent=0pt
	\title[Two solitary waves with logarithmic distance]{Existence of two-solitary waves with logarithmic distance for the nonlinear Klein-Gordon Equation}
	\author{Shrey Aryan}
\begin{abstract}
We consider the focusing nonlinear Klein-Gordon (NLKG) equation
\begin{equation*}
    \partial_{tt}u - \Delta u + u - |u|^{p-1}u = 0,\quad (t,x)\in \mathbb{R}\times \mathbb{R}^d
\end{equation*}
for $1\leq d\leq  5$ and $p>2$ subcritical for the $\dot H^1$ norm. In this paper we show the existence of a solution $u(t)$ of the equation such that 
\begin{equation*}
    \normo{u(t) - \sum_{k=1,2}Q_k(t)} + \normt{\partial_t u(t)} \to 0\quad
    \mbox{as $t\to +\infty$,}
\end{equation*}
where $Q_k(t,x)$ are two solitary waves of the equation with  translations $z_k:\mathbb{R}\to \mathbb{R}^d$ satisfying 
\begin{equation*}
    |z_1(t) - z_2(t)| \sim 2\log(t)\quad  \text{as } t\to +\infty.
\end{equation*}
This behaviour is due to the strong interactions between solitary waves which is in contrast with the previous work \cite{C_te_2018} on multi-solitary waves of the (NLKG), devoted to the case of solitary waves with different speeds.
The present work is motivated by previous similar existence results for the nonlinear Schr\"odinger and generalized Korteweg-de Vries equations.
\end{abstract}
\maketitle
\section{Introduction}
\subsection{Problem Setup}
We consider the focusing nonlinear Klein-Gordon (NLKG) equation in $\mathbb{R}^d$ for any $1\leq d\leq 5,$
\begin{equation}\label{nlkg}
\partial_{tt}u-\Delta u + u-f(u)=0, \quad (t,x) \in \mathbb{R}\times \mathbb{R}^d
\end{equation} where $f=|u|^{p-1} u$ and $p>2$ is energy subcritical in the sense that,
\begin{equation}\label{on:p}
    2 < p <+\infty \text{ for } d=1,2 \text{ and } 2<p<\frac{d+2}{d-2} \text{ for } d = 3,4,5.
\end{equation}
Rewriting this equation as a first order system with $\vec{u} = (u,\partial_t u ) = (u,v)$ we get,
\begin{equation}\label{nlkg system}
    \begin{cases}
    \partial_t u = v\\
    \partial_t v = \Delta u - u + f(u).
    \end{cases}
\end{equation}
This framework ensures that the corresponding Cauchy problem for \eqref{nlkg} is locally well-posed in $H^1(\mathbb{R}^d)\times  L^2(\mathbb{R}^d)$.
Thus for any initial data $(u_0, v_0) \in H^1(\mathbb{R}^d) \times L^2(\mathbb{R}^d)$ there exists a unique (in some class) maximal solution $u\in C([0, T_{\max}), H^1(\mathbb{R}^d)) \cap C^{1}([0, T_{\max}), L^2(\mathbb{R}^d))$ of $\eqref{nlkg}.$ For proof see the result due to Ginibre and Velo in \cite{local-wellposedness}. Let $F(u) = \frac{1}{p+1}|u|^{p+1}.$ Then recall that the Energy $H$ and Momentum $P$
\begin{align}
    &H[u, \partial_t u](t) = \frac{1}{2}\int \left[|\partial_t u(t,x)|^2 + \left|\nabla u(t,x)\right|^2 + |u(t,x)|^2 - 2F(u(t,x))\right] {\rm{d}}x,\\
    &P[u, \partial_t u](t) = \frac{1}{2} \int \partial_t u(t,x) \nabla u(t,x) {\rm{d}}x,
\end{align}
are conserved along the flow. In this paper, we restrict ourselves to the dynamics of two solitary waves associated with the ground state $Q,$ which is the unique positive radial and $H^1$ solution of the following equation:
\begin{equation}
    \Delta Q-Q+f(Q)=0, \quad Q>0,\quad Q\in H^{1}(\mathbb{R}^d).
\end{equation}
The existence and uniqueness of these solutions is well-known and was proved in \cite{Berestycki1983} and \cite{Q-uniqueness} respectively.  Furthermore, it has also been established that these solutions are radial and exponentially decreasing along with the first and the second derivatives. For the sake of completeness, we recall some properties of $Q$ in Section \ref{ground state section}. 
\subsection{Statement of the Main Result}
We prove the existence of two-solitary wave solutions with logarithmic distance for \eqref{nlkg}. In particular, we have the following theorem. 
\begin{theorem}\label{main theorem}Let $p>2$ satisfying \eqref{on:p}. There exists $T_0>0$ and an $H^1(\mathbb{R}^d) \times L^2(\mathbb{R}^d)$ solution $\vec{u} = (u, \partial_t u)$ of \eqref{nlkg} on $[T_0, +\infty)$ that decomposes asymptotically into two solitary waves,
\begin{equation}\label{main-estimate}
    \norm{u(t) - \sum_{k=1,2} Q(\cdot - z_k(t))}_{H^1} + \normt{\partial_t u(t)} \lesssim t^{-1}
\end{equation}
and 
\begin{equation}\label{log distance}
    |z_1(t) - z_2(t)| = 2(1+ o(1)) \log(t),\quad \text{ as } t\to +\infty.
\end{equation}
\end{theorem}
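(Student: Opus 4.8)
The plan is to construct the solution by compactness: first build a family of approximate solutions $\vec u_n$ on $[T_0,S_n]$ with $S_n\to+\infty$, each satisfying the conclusions of the theorem on its interval, and then pass to the limit $n\to\infty$. The backbone is a modulated ansatz

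\begin{equation*}
\vec u(t,x)=\sum_{k=1,2}\bigl(Q(x-z_k(t)),\,\dot z_k(t)\cdot\nabla Q(x-z_k(t))\bigr)+\vec\varepsilon(t,x),
\end{equation*}

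where the translation parameters $z_k(t)$ are chosen, by the symmetry of the two-bump configuration, to be $z_1(t)=-z_2(t)=\tfrac12\rho(t)e_1$ for a scalar distance $\rho(t)$. The first task is the formal derivation of the modulation equations: plugging the ansatz into \eqref{nlkg system} and projecting onto the directions $\nabla Q(\cdot-z_k)$, the leading interaction term between the two solitons is of size $\sim e^{-|z_1-z_2|}=e^{-\rho}$ (this uses the exponential decay of $Q$ recalled in Section \ref{ground state section}), which forces $\ddot\rho\sim c\,e^{-\rho}$ for a universal constant $c>0$. This ODE has the explicit asymptotic solution $\rho(t)\sim 2\log t$, which is exactly \eqref{log distance}; one also reads off $\dot\rho(t)\sim 2/t$, so $\dot z_k$ is small, consistent with treating the velocities perturbatively.

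The second task is the bootstrap/energy estimate producing the approximate solutions. Fix a large $S_n$ and solve \eqref{nlkg} backwards from $t=S_n$ with a well-chosen final datum: the exact two-soliton profile with $z_k=z_k^{\mathrm{app}}(S_n)$ determined by the model ODE, and $\vec\varepsilon(S_n)=0$. On a maximal interval $[T_0,S_n]$ one controls the modulation parameters together with a suitable energy-type functional $\mathcal N(t)$ coercive on $\vec\varepsilon$ — morally $\mathcal N\sim\|\vec\varepsilon\|_{H^1\times L^2}^2$ corrected by localized momentum/mass terms to kill the soliton null directions and by a virial-type correction adapted to the slow logarithmic dynamics. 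The key differential inequality should read, schematically, $\frac{d}{dt}\mathcal N(t)\lesssim \frac1t\mathcal N(t)+\frac1{t^{?}}\mathcal N(t)^{1/2}+ (\text{error})$, with errors coming from the soliton interaction and from the discrepancy between $\rho$ and the model $\rho^{\mathrm{app}}$; integrating from $S_n$ backwards and using $\vec\varepsilon(S_n)=0$ yields $\|\vec\varepsilon(t)\|_{H^1\times L^2}\lesssim t^{-1}$ and $|\rho(t)-2\log t|=o(\log t)$, uniformly in $n$. This is \eqref{main-estimate}.

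The final task is compactness and passage to the limit: the uniform bounds give weak-$H^1\times L^2$ limits of $\vec u_n(T_0)$ along a subsequence; by local well-posedness (Ginibre–Velo, \cite{local-wellposedness}) the corresponding solution $\vec u$ exists on $[T_0,+\infty)$ and, by weak lower semicontinuity of the norms and continuity of the flow, still satisfies \eqref{main-estimate} and \eqref{log distance}. I expect the main obstacle to be the construction of the right coercive functional $\mathcal N$ and closing the bootstrap: unlike the different-speeds case of \cite{C_te_2018}, here the two solitons are essentially at rest relative to each other and the interaction is the dominant slow effect, so one cannot rely on a large relative velocity to decouple them. One must instead exploit the logarithmic separation to make the interaction terms just barely integrable against the $t^{-1}$ decay of $\vec\varepsilon$, and carefully track the sign of the leading interaction (it must be repulsive — consistent with $\ddot\rho>0$) to get the virial correction with the correct sign. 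A secondary technical point is handling $d\le 5$ and $p>2$ possibly non-integer in the nonlinear estimates on $f(u)-\sum f(Q_k)$, which requires the standard fractional/Hölder manipulations but no new idea.
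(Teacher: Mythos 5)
Your overall architecture (modulated two-bump ansatz, modulation equations reducing to an ODE for the distance, backward integration from $t=S_n$ with a bootstrap, compactness to pass to the limit) matches the paper's. But there are two genuine gaps, and one sign error that is not cosmetic.

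First, the sign and the instability of the ODE regime. You assert the interaction must be repulsive, $\ddot\rho\sim c\,e^{-\rho}$ with $c>0$. For same-sign solitary waves the interaction is \emph{attractive}: the relevant model is $\ddot z=-2e^{-z}$ (see Remark 1.3 of the paper), whose energy identity $\tfrac12\dot z^2-2e^{-z}=E$ shows that the logarithmic regime $z=2\log t$ is exactly the threshold case $E=0$. In the repulsive case $\tfrac12\dot\rho^2+ce^{-\rho}=E$ forces $E>0$ and linear growth of $\rho$, so no logarithmic solution exists at all; your sign would make \eqref{log distance} underivable. Moreover, because $2\log t$ is an \emph{exceptional} (unstable) solution of the attractive ODE, fixing the final separation to the model value $\rho^{\mathrm{app}}(S_n)$ and integrating backward does not work: generic nearby data leave the logarithmic regime. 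The paper must let the final separation $\bar z_n$ range over an interval of size $T_n\log^{-1/2}T_n$ and select the good value by a shooting argument.

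Second, the exponential instabilities of the linearized flow. The operator $\mathcal L=-\Delta+1-pQ^{p-1}$ has a negative eigenvalue $-\nu_0^2$, so the linearization around each soliton carries modes $a_k^{\pm}$ with $\frac{d}{dt}a_k^{\pm}\approx\pm\nu_0 a_k^{\pm}$; the $a_k^-$ modes grow exponentially \emph{backward} in time. Your differential inequality for a coercive functional $\mathcal N$ cannot control these finitely many directions: taking $\vec\varepsilon(S_n)=0$ and integrating backward loses them immediately. The paper instead prepares the final data as $\vec Q_{1,n}+\vec Q_{2,n}+\vec W(a_{1,n},a_{2,n})$, where $\vec W$ (Lemma 3.1) places prescribed values on the $\vec Z_k^-$ directions while respecting the orthogonality conditions, and then runs a topological (no-retraction / Brouwer) argument over $(\bar z_n,a_{1,n},a_{2,n})\in[-1,1]\times\mathcal B_{\mathbb R^2}(1)$, using outgoing transversality of the flow at the boundary of the bootstrap region, to find data for which the bootstrap holds down to $T_0$. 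Without both the preparation map $\vec W$ and the topological selection of $(\bar z_n, a_{k,n})$, the backward bootstrap does not close. The energy functional itself also needs the localized momentum correction $\mathcal J$ and the interaction correction $\mathcal S=\langle G+D,\varepsilon\rangle$ to reach the rate $t^{-1}\log^{-3/2}t$, which is strictly better than the $t^{-1}$ you aim for and is needed to beat the $t^{-1}$ contribution of the velocities $\ell_k$ in \eqref{main-estimate}; but these are refinements, whereas the two points above are where your argument as written would fail.
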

\noindent
\begin{remark}
Historically, multi-solitary wave solutions (also called multi-solitons) were constructed for integrable equations, like the Korteweg-de Vries equation (see for instance the review paper \cite{kdV multi-soliton}) and 1D cubic Schr\"odinger equation (\cite{1d Schrodinger multi-soliton}). For such integrable system, multi-solitons are special global solutions behaving as the sums of solitons both for $t\to +\infty$ and $t\to -\infty$.
In the nonintegrable setting, such pure multi-solitons are not expected to exist, but one can construct
asymptotic multi-solitary waves, as in the works \cite{merle1990construction} and \cite{martel2006multi}
for the nonlinear Schr\"odinger equation (NLS),  \cite{martel2005asymptotic}
for the generalized Korteweg de Vries equation (gKdV) and 
\cite{cte2012multisolitons} for the nonlinear Klein-Gordon equation.
In those works,  multi-solitary wave solutions exhibit weak interactions for large time.
More precisely, the main result in \cite{cte2012multisolitons} shows that
for any given set of translations $x_1,\cdots, x_K\in \mathbb{R}^d$ and boost velocities $\beta_1,\cdots,\beta_K \in \mathbb{R}^d$, there exists a solution $u(t)$ of \eqref{nlkg} on a semi-infinite interval such that for large enough time
\begin{equation*}
   \norm{(u,\partial_t u)(t) - \sum_{k=1}^{K}(Q_{\beta_k}, \partial_t Q_{\beta_k})(t,\cdot -x_k)}_{H^1\times L^2} \lesssim e^{-\gamma_0 t}
\end{equation*}
where $\gamma_0 >0$ and $Q_{\beta_k}(t,x) = Q\left(\Lambda_{\beta_k}\begin{pmatrix}t\\x\end{pmatrix}\right)$ where we define the matrix $\Lambda_{\beta}$ as follows: for $\beta = (\beta_1, \cdots, \beta_d)^T \in \mathbb{R}^d$ with $|\beta| < 1$ and $\gamma = \frac{1}{\sqrt{1-|\beta|^2}}$ we have,
$$
 \Lambda_{\beta} = 
\begin{pmatrix}   
\gamma & -\beta_1 \gamma & \cdots & \beta_d \gamma \\
-\beta_1 \gamma \\
\vdots   & & \text{Id} + \frac{(\gamma - 1)}{|\beta|^2}\beta \beta^T\\
-\beta_d \gamma 
\end{pmatrix}.$$ 
Later, still inspired by the integrable case (see for instance the discussion in the Introduction of \cite{nguyen2016existence}),
solutions of nonintegrable models containing several solitary waves in strong interaction have been constructed in \cite{Martel-Raphael 2018} for the mass critical (NLS) equation, \cite{nguyen2016existence} for the sub and supercritical (NLS) equation and \cite{nguyen2017strongly} for the (gKdV) equation.
Such strong interaction regimes are quite rigid, the distance between the solitary waves being
of order $\log t$, with apparently no other possible rate. See \cite{jendrej2018dynamics} for an example
of classification result in this framework.
Our motivation was to extend such constructions to the case of the nonlinear Klein-Gordon.
\end{remark}

\begin{remark}
Note that in Theorem \ref{main theorem}, as in the main results of \cite{Martel-Raphael 2018} and \cite{nguyen2016existence}  the solitons have the same sign and are at a $\log$ distance from one another. This behaviour is directly  related to the ODE 
$$\ddot{z}(t) = -2e^{-z(t)}$$
which has $2\log t$ as an exceptional solution with the initial conditions $\left(z(1),\dot{z}(1)\right)= \left(0,2\right)$. Moreover, observe that the energy subcriticality of the exponent $p>2$ allows us to work with finite energy solutions. It is possible to prove a similar result with lower values of $p$ however the analysis is more involved and thus, we do not discuss it here. 
\end{remark}
\subsection{Notation}
We begin by introducing relevant notation. Let $\{\mathbf{e}_1, \cdots, \mathbf{e}_d\}$ be the canonical basis of $\mathbb{R}^d$ and let $\mathcal{B}_{\mathbb{R}^d}(\rho)$ be the closed ball centered at the origin with radius $\rho >0$ for the usual norm $|\xi| = (\sum_{i=1}^{d} \xi_i^2 )^{1/2}$. We define the $L^2$ inner product for any two functions $f,g\in L^2(\mathbb{R}^d)$ and vector valued functions $\vec{f} = (f_1,f_2)$ and $\vec{g} = (g_1,g_2)$ as 
$$\inner{f}{g}_{L^2} = \inner{f}{g} = \int f(x) g(x){\rm{d}}x, \quad \inner{\vec{f}}{\vec{g}} = \int f_1(x) g_1(x){\rm{d}}x + \int f_2(x) g_2(x){\rm{d}}x.$$
Furthermore, for any $f,g\in H^1(\mathbb{R}^d)$ we have the following inner product,
$$\inner{f}{g}_{H^1} = \int \left(f(x)g(x) + \nabla f(x) \cdot \nabla g(x)\right){\rm{d}}x.$$
 Since we will work on the energy space $ E= H^{1}(\mathbb{R}^{d})\times L^2(\mathbb{R}^d)$ we define the inner product on this space for any $\vec{u}=(u_{1},u_{2})$ and $\vec{v}=(v_{1},v_{2})$ with $\vec{u},\vec{v}\in E$ such that, 
$$\inner{\vec{u}}{\vec{v}}_{E} = \inner{u_1}{v_1}_{H^1} + \inner{u_2}{v_2}_{L^2}. $$
\subsection{Ground state and spectral theory}\label{ground state section} The ground state $Q$ can be written as $Q(x) = q(|x|)$ with $q>0$ satisfying the ODE,
\begin{equation}\label{q ODE}
    q'' + \frac{d-1}{r} q' - q + q^{p} = 0, \quad q'(0) = 0, \quad \lim_{r\to +\infty} q(r) = 0.
\end{equation}
Using standard ODE arguments we have the following estimate for some constant $\kappa>0$ and for all $r>1$,
\begin{equation}\label{q bound}
    \left|q(r) - \kappa r^{-\frac{d-1}{2}}e^{-r}\right| + \left|q'(r) + \kappa r^{-\frac{d-1}{2}}e^{-r}\right| \lesssim r^{-\frac{d+1}{2}}e^{-r}.
\end{equation}
See e.g. \cite[\S4.2]{sulem}. Due to the radial symmetry we also have the following properties,
\begin{equation}\label{Q orthogonality}
    \forall i\neq j, \quad \int \partial_{x_i} Q(x) \partial_{x_j}Q(x) {\rm{d}}x = 0, \quad \forall i, j \text{ and } k, \quad \int \partial_{x_i x_j} Q(x) \partial_{x_k}Q(x) {\rm{d}}x = 0.
\end{equation}
Next we recall some spectral properties associated to the linear operator
\begin{equation}
\mathcal{L} = -\Delta + 1 - pQ^{p-1}.
\end{equation}
\begin{lemma}\label{eq:spectral properties}
    \emph{(i) Spectral properties.} The unbounded operator $\mathcal{L}$ on $L^2$ with domain $H^2$ is self-adjoint, its continuous spectrum is $[1, +\infty),$ its kernel is ${\rm{span}}\{\pj Q:j=1,\cdots, d\}$ and it has a unique negative eigenvalue $-\nu_0^2,$ with corresponding smooth normalized radial eigenfunction $Y$ $($$\normt{Y} =1$$)$. Moreover, on $\mathbb{R}^d$
    \begin{equation*}
        |\partial_{x}^{\beta} Y(x)|\lesssim e^{-\sqrt{1+\nu_0^2}|x|} \quad \text{ for any } \beta=(\beta_1,\cdots, \beta_d) \in \mathbb{N}^d.
    \end{equation*}
   \emph{(ii) Coercivity property.} There exists $c>0$ such that for all $\varepsilon\in H^1,$
            \begin{equation}
                \inner{\mathcal{L}\varepsilon}{\varepsilon} \geq c\norm{\varepsilon}_{H^1}^2 - c^{-1}\left(\inner{\varepsilon}{Y}^2 + \sum_{j=1}^{d} \inner{\varepsilon}{\pj Q}^2\right).
            \end{equation}
\end{lemma}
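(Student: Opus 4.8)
The plan is to treat $\mathcal{L}=-\Delta+1-pQ^{p-1}$ as a relatively compact perturbation of $-\Delta+1$, to read off the coarse spectral picture from general principles, to locate the bottom eigenvalues with explicit test functions, and finally to deduce (ii) from the resulting spectral decomposition. Since $Q$ and its derivatives decay exponentially by \eqref{q bound}, the multiplication operator $pQ^{p-1}$ is bounded on $L^2$ and $(-\Delta+1)$-compact, so $\mathcal{L}$ is self-adjoint on $H^2$ by the Kato--Rellich theorem, and Weyl's theorem on the stability of the essential spectrum under relatively compact perturbations gives $\sigma_{\mathrm{ess}}(\mathcal{L})=\sigma_{\mathrm{ess}}(-\Delta+1)=[1,+\infty)$.

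For the point spectrum, differentiating the ground state equation $\Delta Q-Q+f(Q)=0$ in the variable $x_j$ yields $\mathcal{L}(\pj Q)=0$, so $\mathrm{span}\{\pj Q:1\le j\le d\}\subseteq\ker\mathcal{L}$; the reverse inclusion is precisely the nondegeneracy of the ground state, a classical fact that we invoke from the literature. Pairing the equation for $Q$ with $Q$ gives the identity $\int(|\nabla Q|^2+Q^2)=\int Q^{p+1}$, hence $\inner{\mathcal{L}Q}{Q}=(1-p)\int Q^{p+1}<0$ because $p>2$; since the essential spectrum starts at $1$, the infimum of the spectrum of $\mathcal{L}$ is therefore a negative discrete eigenvalue $-\nu_0^2$, which by the Perron--Frobenius property of Schr\"odinger operators is simple, with an eigenfunction $Y$ of constant sign; invariance of $\mathcal{L}$ under rotations forces $Y$ to be radial, and we normalize $\normt{Y}=1$. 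That $-\nu_0^2$ is the only negative eigenvalue is again part of the classical spectral analysis of this operator (it can be seen by comparison with $-\Delta+1-Q^{p-1}$, whose positive ground state $Q$ shows that this operator is nonnegative), which we cite.

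The exponential bounds on $Y$ follow from rewriting the eigenvalue equation as $(-\Delta+1+\nu_0^2)Y=pQ^{p-1}Y$. As $Y\in L^2$ lies below the essential spectrum, a Combes--Thomas (or Agmon) estimate first yields $e^{c|x|}Y\in L^2$ for some small $c>0$; elliptic regularity and Sobolev embedding then promote this to a pointwise bound $|Y(x)|\lesssim e^{-c|x|}$. Feeding this bound back into the right-hand side — which decays faster than $Y$ because $Q^{p-1}$ is itself exponentially small — and convolving with the Yukawa kernel of $-\Delta+1+\nu_0^2$ (which decays like $e^{-\sqrt{1+\nu_0^2}|x|}$) improves the rate by a fixed amount at each iteration, so after finitely many steps one reaches the rate $\sqrt{1+\nu_0^2}$; the estimates on $\partial_x^\beta Y$ follow by differentiating the equation and applying interior elliptic estimates.

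For (ii), decompose $\varepsilon\in H^1$ as $\varepsilon=aY+\sum_{j=1}^d b_j\,\pj Q+\eta$ with $\eta$ orthogonal in $L^2$ to $Y$ and to every $\pj Q$; using \eqref{Q orthogonality} and the radiality of $Y$ one finds $a=\inner{\varepsilon}{Y}$, $b_j=\inner{\varepsilon}{\pj Q}\,\normt{\pj Q}^{-2}$, and $\normo{\varepsilon}^2\lesssim a^2+\sum_j b_j^2+\normo{\eta}^2$. Because $0$ is an isolated eigenvalue (of finite multiplicity, below the essential spectrum), $\mathcal{L}$ is bounded below by the first positive spectral value $\lambda_1>0$ on the orthogonal complement where $\eta$ lies, so $\inner{\mathcal{L}\eta}{\eta}\ge\lambda_1\normt{\eta}^2$; writing $\inner{\mathcal{L}\eta}{\eta}=\normo{\eta}^2-p\int Q^{p-1}\eta^2$ and using $\int Q^{p-1}\eta^2\le\|Q\|_{L^\infty}^{p-1}\normt{\eta}^2\lesssim\inner{\mathcal{L}\eta}{\eta}$ upgrades this to $\inner{\mathcal{L}\eta}{\eta}\gtrsim\normo{\eta}^2$. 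Since $\inner{\mathcal{L}\varepsilon}{\varepsilon}=-\nu_0^2a^2+\inner{\mathcal{L}\eta}{\eta}$ (all cross terms vanish by the orthogonality relations together with $\mathcal{L}Y=-\nu_0^2Y$ and $\mathcal{L}\pj Q=0$), combining the above gives $\normo{\varepsilon}^2\lesssim\inner{\mathcal{L}\varepsilon}{\varepsilon}+\inner{\varepsilon}{Y}^2+\sum_j\inner{\varepsilon}{\pj Q}^2$, which rearranges to the claimed coercivity. The main obstacle is conceptual rather than computational: the exact description of $\ker\mathcal{L}$ and the simplicity and uniqueness of the negative eigenvalue rest on the nondegeneracy of $Q$, a deep input borrowed from the literature; among the steps actually carried out, pinning down the sharp decay rate $\sqrt{1+\nu_0^2}$ for $Y$ demands the most care.
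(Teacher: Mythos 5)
The paper does not actually prove this lemma: it simply cites Lemma~1 of \cite{cte2012multisolitons}, where the same statement is established for the NLKG linearized operator. Your proposal therefore does strictly more than the paper, and the route you take is the standard one that the cited reference itself follows: Kato--Rellich and Weyl for self-adjointness and $\sigma_{\mathrm{ess}}=[1,+\infty)$, the test function $Q$ with $\mathcal{L}Q=(1-p)Q^{p}$ to produce a negative direction, Perron--Frobenius for simplicity and positivity of the ground state $Y$, Agmon-type decay plus a bootstrap through the Yukawa kernel for the rate $\sqrt{1+\nu_0^2}$, and the spectral decomposition $\varepsilon=aY+\sum_j b_j\pj Q+\eta$ combined with the elementary upgrade $\inner{\mathcal{L}\eta}{\eta}\gtrsim\normo{\eta}^2$ for the coercivity. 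All of these steps are correct, and you are right that the two genuinely deep inputs (nondegeneracy of $Q$, i.e. $\ker\mathcal{L}=\mathrm{span}\{\pj Q\}$, and the fact that there is exactly one negative eigenvalue) must be borrowed from the literature, exactly as the paper does wholesale.

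One aside in your argument is wrong as stated and worth correcting: you suggest that uniqueness of the negative eigenvalue ``can be seen by comparison with $-\Delta+1-Q^{p-1}$,'' whose nonnegativity follows from $Q>0$ being in its kernel. But $\mathcal{L}=(-\Delta+1-Q^{p-1})-(p-1)Q^{p-1}$ is a \emph{negative} perturbation of that nonnegative operator, so the comparison runs in the wrong direction: $\mathcal{L}\le -\Delta+1-Q^{p-1}$ as quadratic forms gives no upper bound on the number of negative eigenvalues of $\mathcal{L}$. The correct classical argument uses the variational characterization of $Q$ (minimizer of the action on the Nehari manifold, a codimension-one constraint, so the second variation $\mathcal{L}$ has Morse index at most one there), and this is what the references you would cite actually do. Since you defer to the literature for this point anyway, the lapse does not invalidate the proof, but the parenthetical should be deleted or replaced.
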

\begin{proof}
See the proof of Lemma $1$ in \cite{cte2012multisolitons}.
\end{proof}
Observe that if we set,
\begin{align*}
    \vec{Z}^{\pm} = \begin{pmatrix}\pm \nu_0 Y \\ Y\end{pmatrix}, \quad \vec{Y}^{\pm} = \begin{pmatrix}
    Y \\
    \pm \nu_0 Y
\end{pmatrix} 
    \end{align*}
then any solution $\vec{\varepsilon}= (\varepsilon,\eta)$ of the system,
\begin{equation}\label{vec_epsilon}
    \begin{cases}
    \partial_t \varepsilon = \eta \\
    \partial_t \eta  = -\mathcal{L}\varepsilon
    \end{cases}
\end{equation}
implies that,
\begin{align*}
    a^{\pm} = \langle \vec{\varepsilon},\vec{Z}^{\pm}\rangle\quad  \text{satisfies}\quad \frac{{\rm{d}} a^{\pm}}
    {{\rm {d}} t} = \pm \nu_0 a^{\pm}
\end{align*}
and the function $\vec{\varepsilon}^{\pm}(t,x) = \exp(\pm \nu_0 t) \vec{Y}^{\pm}(x)$ solves the linearized problem ~\eqref{vec_epsilon}.
\subsection{Outline of the paper.} Following the introduction, in section $2$ we recall some estimates involving the nonlinear interaction term and geometrical parameters associated to the solitary waves. Then in section 3, we prove backward uniform estimates related to the regime of Theorem \ref{main theorem}. The proof relies on a refinement of the energy method introduced in \cite{nguyen2017strongly} and a standard topological argument to control the unstable exponential directions. Finally in section 4 we conclude the proof of Theorem \ref{main theorem} using compactness arguments.

\section{Dynamics of two solitary waves}
Consider time dependent $\mathcal{C}^1$ parameters $(z_1,z_2,\ell_1,\ell_2)$ with $|\ell_1|\ll 1, |\ell_2|\ll 1$ and $|z|\gg 1$ where 
$$z= z_1-z_2 \quad \text{and}\quad \ell = \ell_1-\ell_2.$$
Define the modulated ground state solitary waves for $k=1,2,$ 
$$Q_k = Q(\cdot - z_k)\quad \text{and}\quad \vec{Q}_k = 
\begin{pmatrix}
    Q_k \\
    -(\ell_k\cdot \nabla)Q_k
\end{pmatrix}.$$ Set  
$$R = Q_1+Q_2, \quad  \vec{R} = \vec{Q}_1+\vec{Q}_2$$
and   
\begin{equation*}
G=f(R) - f(Q_1)-f(Q_2),\quad D=-\sum_{k=1,2}(\ell_k\cdot \nabla)^2Q_k.
\end{equation*}
Define the following functions associated to the exponential instabilities around each solitary wave,
for $k=1,2$,
$$Y_k = Y(\cdot -z_k), \quad \vec{Y}^{\pm}_k = \vec{Y}^{\pm}(\cdot -z_k), \quad \vec{Z}_k^{\pm}= \vec{Z}^{\pm}(\cdot -z_k).$$
From Taylor expansion and $p>2$, for any $s\in \R$,
\begin{equation}\label{f taylor expansion f'}
    f(R+ s) = f(R) + f'(R)s + O(|s|^{2}+|s|^p).
\end{equation}

\subsection{Decomposition around 2 solitary waves.}
First, we recall the following technical lemma in~\cite{cte2019description}.
\begin{lemma}\label{nonlinear-interaction-lemma}
The following estimates hold for $|z|\gg1$.
\begin{enumerate}
    \item For any $0<m'<m$,
    \begin{equation}\label{eq:m'}
    \int |Q_1Q_2|^{m}\lesssim e^{-m'|z|},\quad  \int |\nabla Q_1 \nabla Q_2|^{m}\lesssim e^{-m'|z|}
    \end{equation}
    and 
    \begin{equation}\label{eq:F_Estimate}
        \int |F(R)-F(Q_1)-F(Q_2) -f(Q_1)Q_2-f(Q_2)Q_1|\lesssim e^{-\frac{5}{4}|z|}.
    \end{equation}
    \item For any $m>0,$
    \begin{equation}\label{q_1 q_2^1+m estimate}
        \int |Q_1||Q_2|^{1+m}\lesssim q(|z|),
    \end{equation}
    \begin{equation}\label{eq:gbound}
        \normt{G}\lesssim \normt{Q_1^{p-1}Q_2}+ \normt{Q_1Q_2^{p-1}} \lesssim q(|z|). 
    \end{equation}
    \item There exists a smooth function $g:[0,+\infty)\to \mathbb{R}$ such that, for any $0<\theta<\min(p-1,2)$ and $r>1,$
    \begin{equation}\label{eq:sharp_asymp}
        |g(r) - g_0 q(r)|\lesssim r^{-1}q(r)
    \end{equation}
    where 
    \begin{equation}
        g_0 = \frac{1}{c_1} \int Q^p(x) e^{-x_1} {\rm{d}}x > 0,\quad c_1 = \normt{\partial_{x_1} Q}^2
    \end{equation}
    and 
    \begin{equation}\label{eq:proj_q1}
        \left|\inner{G}{\nabla Q_1} - c_1 \frac{z}{|z|}g(|z|)\right|\lesssim e^{-\theta |z|},
    \end{equation}
    \begin{equation}\label{eq:proj_q2}
        \left|\inner{G}{\nabla Q_2} +  c_1 \frac{z}{|z|} g(|z|)\right|\lesssim e^{-\theta|z|}.
    \end{equation}
    \end{enumerate}
\end{lemma}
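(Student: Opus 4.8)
This statement is quoted from~\cite{cte2019description}; the argument I would give runs as follows. By translation invariance assume $z_2=0$ and $z_1=z$, so $Q_1=Q(\cdot-z)$, $Q_2=Q$, and decompose $\R^d=\Omega_1\cup\Omega_2$ with $\Omega_k=\{|x-z_k|\le|x-z_{k'}|\}$, $\{k,k'\}=\{1,2\}$; on $\Omega_k$ one has $Q_{k'}\le Q_k$ pointwise (radial monotonicity of $Q$) and $|x-z_{k'}|\ge|z|/2$. The single quantitative input is \eqref{q bound}: it gives $0<Q(x),\,|\nabla Q(x)|\lesssim(1+|x|)^{-(d-1)/2}e^{-|x|}$ on $\R^d$ and, more precisely, $q(r)=\kappa r^{-(d-1)/2}e^{-r}(1+O(r^{-1}))$ and $q'(r)=-\kappa r^{-(d-1)/2}e^{-r}(1+O(r^{-1}))$ for $r\ge1$. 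Every estimate below therefore reduces to bounding integrals of the shape $\int_\Omega(1+|x-z_1|)^{-a}(1+|x-z_2|)^{-b}e^{-\mu_1|x-z_1|-\mu_2|x-z_2|}\,\rd x$, and the governing mechanism is $|x-z_1|+|x-z_2|\ge|z|$, which gives $\mu_1|x-z_1|+\mu_2|x-z_2|\ge\min(\mu_1,\mu_2)|z|$ everywhere and, once $|x-z_{k'}|\ge|z|/2$ on $\Omega_k$ is used, a strictly larger multiple of $|z|$; the polynomial prefactors are then harmless because the exponents reached are strictly above the target rates.

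\emph{Parts (i)--(ii).} For \eqref{eq:m'}, write $e^{-m(|x-z_1|+|x-z_2|)}\le e^{-m'|z|}e^{-(m-m')(|x-z_1|+|x-z_2|)}$ and use $\int e^{-(m-m')|x-z_1|}\,\rd x<\infty$; the $\nabla Q$ version is identical. For \eqref{eq:F_Estimate}, on $\Omega_1$ a second-order Taylor expansion of $F$ (licit since $p>2$, so $|F''(u)|\lesssim|u|^{p-1}$) gives $F(R)-F(Q_1)-f(Q_1)Q_2=O(R^{p-1}Q_2^2)=O(Q_1^{p-1}Q_2^2)$, while $|F(Q_2)|+|f(Q_2)Q_1|\lesssim Q_2^{p+1}+Q_2^pQ_1$; combining $|x-z_2|\ge|z|/2$ with $|x-z_1|+|x-z_2|\ge|z|$, each term carries a factor $e^{-\alpha|z|}$ with $\alpha\ge\min(\tfrac{p+1}{2},2)>\tfrac54$, so the $\Omega_1$-integral is $\lesssim e^{-\frac54|z|}$, and $\Omega_2$ is symmetric. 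For \eqref{q_1 q_2^1+m estimate} split at $|x-z_2|=|z|/2$: on $\{|x-z_2|\le|z|/2\}$, monotonicity and $|x-z_1|\ge|z|-|x-z_2|$ give $Q_1(x)\lesssim|z|^{-(d-1)/2}e^{-|z|}e^{|x-z_2|}$, so this piece is $\lesssim|z|^{-(d-1)/2}e^{-|z|}\int e^{-m|x-z_2|}\,\rd x\lesssim q(|z|)$; on the complement, $Q_2^{1+m}\le q(|z|/2)^mQ_2$ and $\int Q_1Q_2\lesssim e^{-(1-\varepsilon)|z|}$, hence this piece is $\lesssim|z|^{-m(d-1)/2}e^{-(1+m/4)|z|}\ll q(|z|)$, the exponential gain from $q(|z|/2)^m$ beating all polynomials. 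Finally, for \eqref{eq:gbound} the pointwise bound $|G|\lesssim Q_1^{p-1}Q_2+Q_1Q_2^{p-1}$ (Taylor plus $Q_{k'}\le Q_k$ on $\Omega_k$) reduces matters to $\normt{Q_1^{p-1}Q_2}^2=\int Q_1^{2(p-1)}Q_2^2$, whose mass sits near $z_1$ since $2(p-1)>2$, where it is $\lesssim q(|z|)^2\int Q^{2(p-1)}\lesssim q(|z|)^2$; hence $\normt{G}\lesssim q(|z|)$.

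\emph{Part (iii).} Set $g(r):=-c_1^{-1}\int Q(y)^p\partial_{x_1}Q(y+\mathbf{e}_1r)\,\rd y$; by radial symmetry of $Q$ and oddness of $\nabla Q$ in each variable, $-\int Q(y)^p\nabla Q(y+z)\,\rd y=c_1g(|z|)\tfrac{z}{|z|}$, and $g\in C^\infty([0,\infty))$ with $g(0)=0$. For \eqref{eq:sharp_asymp}: for $r\ge1$ and bounded $y$, $|y+\mathbf{e}_1r|=r+y_1+O(r^{-1})$ and $\tfrac{y_1+r}{|y+\mathbf{e}_1r|}=1+O(r^{-1})$, so \eqref{q bound} yields $\partial_{x_1}Q(y+\mathbf{e}_1r)=-\kappa r^{-(d-1)/2}e^{-r}e^{-y_1}(1+O(r^{-1}))$ uniformly; truncating the $y$-integral to $|y|\le\sqrt r$ at negligible cost and integrating gives $g(r)=\kappa r^{-(d-1)/2}e^{-r}\,c_1^{-1}\!\int Q^pe^{-y_1}\,\rd y\,(1+O(r^{-1}))=g_0q(r)(1+O(r^{-1}))$, i.e.\ $|g(r)-g_0q(r)|\lesssim r^{-1}q(r)$. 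For \eqref{eq:proj_q1}: on $\Omega_k$ write $G=f'(Q_k)Q_{k'}+O(Q_k^{p-2}Q_{k'}^2)$ (the $-f(Q_{k'})$ term is absorbed, since $Q_{k'}^p\le Q_k^{p-2}Q_{k'}^2$ on $\Omega_k$), pair with $\nabla Q_1$, and extend each piece to $\R^d$. Up to polynomials every error integral is $O(e^{-\alpha|z|})$ with $\alpha\ge\min(p-1,2)$: the Taylor remainders (which gain one extra power of $Q_1$ from $|\nabla Q_1|\lesssim Q_1$ and live where $|x-z_{k'}|\ge|z|/2$) and the enlargement errors $\int_{\Omega_2}f'(Q_1)Q_2\nabla Q_1$, $\int_{\Omega_1}f'(Q_2)Q_1\nabla Q_1$ in fact have $\alpha\ge\min(\tfrac{p+1}{2},2)$, while $\int_{\R^d}f'(Q_2)Q_1\nabla Q_1=-\tfrac{p(p-1)}{2}\int Q_2^{p-2}Q_1^2\nabla Q_2=O(\int Q_1^2Q_2^{p-1})$ carries the borderline exponent $\min(p-1,2)$. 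Since $\theta<\min(p-1,2)$, all of these are $\lesssim e^{-\theta|z|}$, and the surviving term is exactly $\int_{\R^d}f'(Q_1)Q_2\nabla Q_1=-\int Q_1^p\nabla Q_2=-\int Q(y)^p\nabla Q(y+z)\,\rd y=c_1g(|z|)\tfrac{z}{|z|}$. Estimate \eqref{eq:proj_q2} follows with $1$ and $2$ interchanged, using oddness of $\nabla Q$ to rewrite $-\int Q_2^p\nabla Q_1=-\int Q(x)^p\nabla Q(x-z)\,\rd x=-c_1g(|z|)\tfrac{z}{|z|}$.

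The main obstacle is the error control in Part (iii) at the rate $e^{-\theta|z|}$ for $\theta$ up to $\min(p-1,2)$. Expanding $G$ around a single soliton is too lossy: the term $-f(Q_{k'})$ it leaves, paired with $\nabla Q_1$, contributes $\sim q(|z|)\sim e^{-|z|}$ (up to polynomials) from a neighbourhood of $z_{k'}$, which is worthless once $\theta>1$; one must expand around whichever soliton dominates, i.e.\ use the $\Omega_1\cup\Omega_2$ split. A second subtlety then surfaces near the hyperplane $|x-z_1|=|x-z_2|$, where the quadratic remainder on $\Omega_2$ is only of size $e^{-p|z|/2}$ (and $p/2<p-1$) unless one also spends the extra decay $|\nabla Q_1|\lesssim Q_1$ to push it to $e^{-(p+1)|z|/2}$. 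Everything else, in particular the uniformity in the Laplace-type expansion of $g$, is routine given \eqref{q bound}.
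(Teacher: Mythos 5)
Your proposal is correct, and it is far more self-contained than the paper's proof: the paper only argues part (i) directly and cites Lemma 2.1 of \cite{cte2019description} for parts (ii) and (iii), whereas you reconstruct everything. For \eqref{eq:m'} the two arguments are identical. For \eqref{eq:F_Estimate} the paper avoids your domain decomposition entirely: it invokes the single pointwise bound $|F(R)-F(Q_1)-F(Q_2)-f(Q_1)Q_2-f(Q_2)Q_1|\lesssim|Q_1Q_2|^{3/2}$ (valid for $p\ge 2$ and bounded $Q$) and applies \eqref{eq:m'} with $m=3/2$, $m'=5/4$ --- a genuinely shorter route that delivers exactly the stated non-sharp rate $e^{-\frac54|z|}$, while your $\Omega_1\cup\Omega_2$ splitting buys the stronger (but unneeded) exponent $\min(\frac{p+1}{2},2)$. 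For parts (ii)--(iii) your argument is a faithful reconstruction of the cited reference: the definition of $g$ via $-c_1^{-1}\int Q^p\,\partial_{x_1}Q(\cdot+r\mathbf{e}_1)$, the Laplace-type expansion giving \eqref{eq:sharp_asymp}, the expansion of $G$ around the locally dominant soliton, and the identification of the borderline contribution $\int_{\R^d} f'(Q_2)Q_1\nabla Q_1=O\bigl(\int Q_1^2Q_2^{p-1}\bigr)$ with exponent $\min(p-1,2)$ are all the right mechanism, and your signs are consistent with \eqref{eq:proj_q1}--\eqref{eq:proj_q2}. One small overstatement: the enlargement error $\int_{\Omega_1}f'(Q_2)Q_1\nabla Q_1$ does not have rate $\min(\frac{p+1}{2},2)$ when $2<p<3$, since minimizing $(p-1)|x-z_2|+2|x-z_1|$ over $\Omega_1$ gives only $(p-1)|z|$, attained at $x=z_1$; but as $\theta<\min(p-1,2)=p-1$ this still closes the estimate, so nothing breaks.
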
 
\begin{proof}
For parts $\rm{(ii)} , \rm{(iii)}$ see proof of Lemma 2.1 in \cite{cte2019description}.
\newline
Proof of \rm{(i)}. From estimate, ~\eqref{q bound} we have $|Q(y)|\lesssim e^{-|y|}$. Therefore, 
\begin{align*}
    \int\left|Q_{1} Q_{2}\right|^{m} \mathrm{d} y \lesssim \int e^{-m|y|} e^{-m^{\prime}|y+z|} \mathrm{d} y \lesssim e^{-m^{\prime}|z|} \int e^{-\left(m-m^{\prime}\right)|y|} \mathrm{d} y \lesssim e^{-m^{\prime}|z|}.
\end{align*}
Similarly by ~\eqref{q bound} we have $|\nabla Q(y)|\lesssim e^{-|y|}$ and thus by the same argument we get ~\eqref{eq:m'}. For the last part using $p>2$ and Taylor expansion we get,
\begin{align*}
    \left|F\left(Q_{1}+Q_{2}\right)-F\left(Q_{1}\right)-F\left(Q_{2}\right)-f\left(Q_{1}\right) Q_{2}-f\left(Q_{2}\right) Q_{1}\right| \lesssim\left|Q_{1} Q_{2}\right|^{\frac{3}{2}}.
\end{align*}
Therefore, using $\eqref{eq:m'}$ with $m=\frac{3}{2}$ and $m'=\frac{5}{4}$ we get \eqref{eq:F_Estimate}.
\end{proof}
Second, we prove a general decomposition around $2$ solitary waves.
\begin{lemma}[Properties of the decomposition]\label{decomposition around 2 solitons}
There exists $\gamma_0>0$ such that for any $0<\gamma<\gamma_0,$ $T_1\leq T_2$ and any solution $\vec{u} = (u,\partial_t u)$ of \eqref{nlkg} on $[T_1, T_2]$ satisfying 
\begin{equation}\label{decom}
\sup_{t\in [T_1,T_2]}  \bigg(\inf_{|\zeta_1-\zeta_2| > |\log \gamma|}\bigg\|{u- \sum_{k=1,2}Q(\cdot - \zeta_k)}\bigg\|_{H^{1}} + \normt{\partial_t u}\bigg) <\gamma
\end{equation}
there exist unique $\mathcal{C}^1$ functions
$$t\in [T_1, T_2]\to (z_1,z_2,\ell_1,\ell_2)(t)\in \mathbb{R}^{4d}$$
such that the solution $\vec{u}$ decomposes on $[T_1,T_2]$ as
\begin{equation}\label{u vec decomposition}
\vec{u} = \begin{pmatrix}
    u \\
    \partial_t u
\end{pmatrix} = \vec{Q}_1 + \vec{Q}_2 + \vec{\varepsilon}, \quad \vec{\varepsilon} = \begin{pmatrix}
    \varepsilon \\
    \eta
\end{pmatrix}    
\end{equation}
with the following properties on $[T_1, T_2]$.
\begin{enumerate}
    \item \emph{Orthogonality and smallness}. For any $k=1,2$ and $j = 1,\cdots, d$
    \begin{equation}\label{eq:orthogonal}
    \inner{\varepsilon}{\partial_{x_j}Q_k}=\inner{\eta}{\partial_{x_j}Q_k}=0   
    \end{equation}
    and 
    \begin{equation}\label{eq:init}
    \normpro{\vec{\varepsilon}} + \sum_{k=1,2}|\ell_k| + e^{-2|z|}\lesssim \gamma.
    \end{equation}
    \item \emph{Equation of $\vec{\varepsilon}.$}
    \begin{equation}\label{eq:epsilon_t and eta_t}
        \begin{aligned}
        \partial_t \varepsilon&= \eta + {\rm{Mod}_{\varepsilon}}\\
        \partial_t \eta&= \Delta \varepsilon - \varepsilon + f(R+\varepsilon) -f(R) +{\rm{Mod}_{\eta}} + G+D
        \end{aligned}
    \end{equation}
    where
    \begin{equation}\label{mod_eta, mod_epsilon}
    \begin{split}
        \rm{Mod}_{\varepsilon} &= \sum_{k=1,2}(\dot{z}_k - \ell_k)\cdot \nabla Q_k,\\
        \rm{Mod}_{\eta} &= \sum_{k=1,2}\dot{\ell}_k\cdot \nabla Q_k - \sum_{k=1,2} (\ell_k\cdot \nabla )(\dot{z}_k-\ell_{k})\cdot \nabla Q_k .
    \end{split}
    \end{equation}
    \item \emph{Equation of $z_k.$} For $k=1,2$
    \begin{equation}\label{eq:zkdot-lk}
        |\dot{z}_k - \ell_k| \lesssim 
        \normpro{\vec{\varepsilon}} (|\ell_{1}| + |\ell_{2}|).
    \end{equation}
    \item \emph{Equation of $\ell_k.$} For any $1<\theta < \min(p-1,2), m\in (0,1)$ and $k=1,2$
    \begin{equation}\label{eq:refined lkdot}
        \begin{split}
        \left|\dot{\ell}_k -(-1)^k \frac{z}{|z|} g(|z|)\right| &\lesssim  \normpro{\vec{\varepsilon}}^2 +   (|\ell_1| + |\ell_2|)^2 e^{-m|z|} + e^{-\theta |z|}\\ 
        &\quad  + (|\ell_1| + |\ell_2|) \normpro{\vec{\varepsilon}}.   
        \end{split}
    \end{equation}
    \item \emph{Equations of the exponential directions.} Let $a_{k}^{\pm} =\inner{\vec{\varepsilon}}{\vec{Z}_k^{\pm}}$
    then,
    \begin{equation}\label{eq:exponential_directions}
    \left|\frac{{\rm{d}}}{{\rm{d}}t}a_{k}^{\pm} \mp \nu_0 a_{k}^{\pm} \right|\lesssim \normpro{\vec{\varepsilon}}^2 + \sum_{k=1,2} |\ell_k|^2 + q(|z|).
    \end{equation}
    \end{enumerate}
\end{lemma}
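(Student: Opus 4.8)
The plan is to obtain the decomposition by modulation around the two-soliton profile, and then to read off all parameter equations by differentiating the orthogonality conditions, as in \cite{cte2012multisolitons,nguyen2017strongly}; the genuinely new input is the sharp interaction asymptotics of Lemma \ref{nonlinear-interaction-lemma}$(iii)$, which enters part $(iv)$. For part $(i)$, note that $\varepsilon = u - Q(\cdot-z_1)-Q(\cdot-z_2)$ depends only on $(z_1,z_2)$, so one first solves the $2d$ equations $\inner{\varepsilon}{\pj Q_k}=0$ ($k=1,2$, $j=1,\dots,d$) for $(z_1,z_2)$ by the implicit function theorem: at $u=Q(\cdot-\zeta_1)+Q(\cdot-\zeta_2)$, $z_k=\zeta_k$, the differential in $(z_1,z_2)$ is block diagonal with invertible blocks $-c_1\mathrm{Id}$ ($c_1=\normt{\pj Q}^2>0$, by \eqref{Q orthogonality}) up to off-diagonal overlap terms of size $O(e^{-\sigma|\zeta_1-\zeta_2|})$ for some $\sigma>0$, controlled by \eqref{eq:m'}; since $|\zeta_1-\zeta_2|>|\log\gamma|\gg1$ these are a harmless perturbation, so one gets unique $\mathcal{C}^1$ maps $u\mapsto(z_1,z_2)$ near the two-soliton set. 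With $(z_1,z_2)$ determined, $\eta=\partial_t u+\sum_k(\ell_k\cdot\nabla)Q_k$ and the conditions $\inner{\eta}{\pj Q_k}=0$ are affine in $(\ell_1,\ell_2)$ with the same invertible principal part $c_1\mathrm{Id}$, so $(\ell_1,\ell_2)$ is likewise $\mathcal{C}^1$ in $u$. Composing with $t\mapsto\vec{u}(t)\in C([T_1,T_2],E)$ gives continuity; comparing $z_k$ with the near-minimizer $\zeta_k$ in \eqref{decom} gives $|z|\gtrsim|\log\gamma|$, $\normpro{\vec{\varepsilon}}+\sum_k|\ell_k|\lesssim\gamma$ and hence $e^{-2|z|}\lesssim\gamma^2\lesssim\gamma$, i.e. \eqref{eq:init}. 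The $\mathcal{C}^1$-in-$t$ regularity is the standard bootstrap, using that $t\mapsto\inner{u(t)}{\phi}$ and $t\mapsto\inner{\partial_t u(t)}{\phi}$ are $\mathcal{C}^1$ for fixed Schwartz $\phi$ by \eqref{nlkg system}.

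Parts $(ii)$ and $(iii)$ are mostly bookkeeping. Part $(ii)$ is a direct substitution of \eqref{u vec decomposition} into \eqref{nlkg system}, using $\partial_t Q_k=-(\dot z_k\cdot\nabla)Q_k$, $\Delta Q_k-Q_k=-f(Q_k)$, and $\partial_t[(\ell_k\cdot\nabla)Q_k]=\dot\ell_k\cdot\nabla Q_k-(\ell_k\cdot\nabla)(\dot z_k\cdot\nabla)Q_k$; the identities \eqref{eq:epsilon_t and eta_t}--\eqref{mod_eta, mod_epsilon} fall out, with $G=f(R)-f(Q_1)-f(Q_2)$ and $D=-\sum_k(\ell_k\cdot\nabla)^2Q_k$ appearing naturally. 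For part $(iii)$, differentiate $\inner{\varepsilon}{\pj Q_k}=0$ in $t$; using $\partial_t\varepsilon=\eta+\mathrm{Mod}_\varepsilon$, $\inner{\eta}{\pj Q_k}=0$ and $\partial_t\pj Q_k=-(\dot z_k\cdot\nabla)\pj Q_k$, one gets $\inner{\mathrm{Mod}_\varepsilon}{\pj Q_k}=\inner{\varepsilon}{(\dot z_k\cdot\nabla)\pj Q_k}$; writing $\dot z_k=\ell_k+(\dot z_k-\ell_k)$ throughout makes this a linear system for $(\dot z_1-\ell_1,\dot z_2-\ell_2)$ with matrix $c_1\mathrm{Id}+O(\normpro{\vec{\varepsilon}})+O(e^{-\sigma|z|})$ and right-hand side $O(\normpro{\vec{\varepsilon}}(|\ell_1|+|\ell_2|))$, whose solution is \eqref{eq:zkdot-lk}.

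Part $(iv)$ is the crux. Differentiating $\inner{\eta}{\pj Q_k}=0$ gives $\inner{\partial_t\eta}{\pj Q_k}=-\inner{\eta}{\partial_t\pj Q_k}=O(\normpro{\vec{\varepsilon}}(|\ell_1|+|\ell_2|))$, since $\partial_t\pj Q_k=-(\dot z_k\cdot\nabla)\pj Q_k$ and $|\dot z_k|\lesssim|\ell_1|+|\ell_2|$ by part $(iii)$. Substitute $\partial_t\eta$ from \eqref{eq:epsilon_t and eta_t} and project term by term. The decisive cancellation is that $\pj Q_k\in\ker(-\Delta+1-pQ_k^{p-1})$ (Lemma \ref{eq:spectral properties}): together with \eqref{f taylor expansion f'} this yields $\inner{\Delta\varepsilon-\varepsilon}{\pj Q_k}+\inner{f(R+\varepsilon)-f(R)}{\pj Q_k}=\inner{\varepsilon}{(f'(R)-f'(Q_k))\pj Q_k}+O(\normpro{\vec{\varepsilon}}^2)$, where Sobolev embedding absorbs the $|\varepsilon|^p$ remainder ($p>2$); and since $f'(R)-f'(Q_k)=p(R^{p-1}-Q_k^{p-1})$ is exponentially small precisely where $\pj Q_k$ is not, the first term is $O(\normpro{\vec{\varepsilon}}^2+e^{-\theta|z|})$. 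Next, $\inner{D}{\pj Q_k}$ and the second sum in $\inner{\mathrm{Mod}_\eta}{\pj Q_k}$ have vanishing diagonal parts by the second identity in \eqref{Q orthogonality}, so the former is $O((|\ell_1|+|\ell_2|)^2e^{-m|z|})$, while $\inner{\mathrm{Mod}_\eta}{\pj Q_k}$ contributes the diagonal term $c_1(\dot\ell_k)_j$ plus off-diagonal and $(\dot z_{k'}-\ell_{k'})$-dependent pieces controlled via part $(iii)$. Finally, and this is where Lemma \ref{nonlinear-interaction-lemma}$(iii)$ is used, \eqref{eq:proj_q1}--\eqref{eq:proj_q2} give $\inner{G}{\nabla Q_k}=-(-1)^kc_1\tfrac{z}{|z|}g(|z|)+O(e^{-\theta|z|})$. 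Collecting all contributions into a $2\times2$ block-linear system for $(\dot\ell_1,\dot\ell_2)$ with principal part $c_1\mathrm{Id}$ and inverting it (the small off-diagonal coupling only produces an extra $O(e^{-\sigma|z|}q(|z|))\lesssim e^{-\theta|z|}$) yields \eqref{eq:refined lkdot}. I expect this bookkeeping --- separating the truly quadratic terms in $(\vec{\varepsilon},\ell)$ from the merely exponentially small ones, so as to reach the sharp rate $e^{-\theta|z|}$ for $\theta$ up to $\min(p-1,2)$ --- to be the main obstacle; the sharp asymptotics of Lemma \ref{nonlinear-interaction-lemma} are precisely what make it possible.

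Part $(v)$: write $a_k^{\pm}=\pm\nu_0\inner{\varepsilon}{Y_k}+\inner{\eta}{Y_k}$ and differentiate. The term $\inner{\vec{\varepsilon}}{\partial_t\vec{Z}_k^{\pm}}=-\inner{\vec{\varepsilon}}{(\dot z_k\cdot\nabla)\vec{Z}_k^{\pm}}$ is $O(\normpro{\vec{\varepsilon}}(|\ell_1|+|\ell_2|))$. In $\inner{\partial_t\vec{\varepsilon}}{\vec{Z}_k^{\pm}}$ the linear-in-$\vec{\varepsilon}$ part equals $\pm\nu_0\inner{\eta}{Y_k}+\inner{\Delta\varepsilon-\varepsilon+f'(Q_k)\varepsilon}{Y_k}=\pm\nu_0\inner{\eta}{Y_k}+\nu_0^2\inner{\varepsilon}{Y_k}=\pm\nu_0 a_k^{\pm}$, using $\mathcal{L}Y=-\nu_0^2Y$ (Lemma \ref{eq:spectral properties}) and the definitions of $\vec{Z}^{\pm},\vec{Y}^{\pm}$. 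The remaining contributions are $O(\normpro{\vec{\varepsilon}}^2+\sum_k|\ell_k|^2+q(|z|))$: $\inner{\mathrm{Mod}_\varepsilon}{Y_k}$ and the $\dot\ell_{k'}\cdot\nabla Q_{k'}$ pieces of $\inner{\mathrm{Mod}_\eta}{Y_k}$ have vanishing diagonal part since $Y$ is radial and $\nabla Q$ odd (so only exponentially small overlaps, times $|\dot z_{k'}-\ell_{k'}|$ or $|\dot\ell_{k'}|$ bounded via $(iii)$--$(iv)$, survive), and the other $\mathrm{Mod}_\eta$ pieces are controlled likewise; the $(f'(R)-f'(Q_k))\varepsilon$ and the quadratic Taylor terms are $O(\normpro{\vec{\varepsilon}}^2+q(|z|))$ because $Y_k$ decays like $e^{-\sqrt{1+\nu_0^2}|x-z_k|}$; $|\inner{G}{Y_k}|\lesssim\normt{G}\lesssim q(|z|)$ by \eqref{eq:gbound}; and $|\inner{D}{Y_k}|\lesssim\sum_k|\ell_k|^2$. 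This is \eqref{eq:exponential_directions}.
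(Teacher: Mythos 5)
Your proposal is correct and follows essentially the same route as the paper: modulation via the implicit function theorem for part \emph{(i)} (which the paper delegates to C\^ote--Martel--Yuan--Zhao), direct substitution for part \emph{(ii)}, and differentiation of the orthogonality conditions combined with the kernel identity $\Delta \partial_{x_j}Q_k-\partial_{x_j}Q_k+f'(Q_k)\partial_{x_j}Q_k=0$, the Taylor expansion \eqref{f taylor expansion f'}, the interaction estimates of Lemma \ref{nonlinear-interaction-lemma}, and $\mathcal{L}Y=-\nu_0^2Y$ for parts \emph{(iii)}--\emph{(v)}. The only cosmetic difference is that you absorb the term $\langle(f'(R)-f'(Q_k))\varepsilon,\partial_{x_j}Q_k\rangle$ into $\normpro{\vec{\varepsilon}}^2+e^{-\theta|z|}$ by Young's inequality where the paper keeps it as $q(|z|)\normpro{\vec{\varepsilon}}$; both land inside the stated right-hand sides.
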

\begin{remark}\label{symmetric soliton}
Note that if the solution $u$ of \eqref{nlkg} is symmetric under the map $\tau:x\to -x$ in the context of Lemma \ref{decomposition around 2 solitons} then by uniqueness, the parameters $(z_1,z_2,\ell_1,\ell_2)$ are also symmetric in the sense that $z_1(t) = -z_2(t)$ and $\ell_1(t) = -\ell_2(t)$ for all $t\in [T_1, T_2].$
\end{remark}
\begin{proof} For proof of (i) see proof of Lemma 2.2 in \cite{cte2019description}.
\newline
Proof of (ii). From the definition of $\varepsilon$ and $\eta$ it follows that,
\begin{align*}
\partial_t \varepsilon = \partial_t u - \sum_{k=1,2} \partial_t Q_k = \eta + \sum_{k=1,2} (\dot{z}_k -\ell_k) \cdot \nabla Q_k.
\end{align*}
Next,
\begin{align*}
    \partial_t \eta &= \partial_{tt} u + \sum_{k=1,2} \partial_t (\ell_k \cdot \nabla Q_k)\\
    &= \Delta u -u+f(u)+D + \sum_{k=1,2} \dot{\ell}_k \cdot \nabla Q_k - \sum_{k=1,2} (\ell_k\cdot \nabla) (\dot{z}_k-\ell_{k})\cdot \nabla Q_k.
\end{align*}
Using \eqref{u vec decomposition}, $\Delta Q_k - Q_k + f(Q_k) = 0$ and the definition of $G$,
\begin{align*}
    \Delta u - u + f(u) = \Delta \varepsilon - \varepsilon + f(R+\varepsilon) - f(R) + G.
\end{align*}
Thus we get
\begin{align*}
    \partial_t \eta &= \Delta \varepsilon - \varepsilon + f(R+\varepsilon) - f(R) +  G+D\\
    &\quad+\sum_{k=1,2} \dot{\ell}_k \cdot (\nabla Q_k)-\sum_{k=1,2} (\ell_k\cdot \nabla) (\dot{z}_k-\ell_{k})\cdot \nabla Q_k.
\end{align*}
\newline
Proof of (iii). From~\eqref{eq:orthogonal} and \eqref{eq:epsilon_t and eta_t} for $j=1,\cdots, d,$
\begin{align*}
\frac{\rd}{\rd t}\inner{\varepsilon}{\partial_{x_j}Q_1} &= \inner{\partial_t \varepsilon}{\partial_{x_j} Q_1} + \inner{\varepsilon}{\partial_t (\partial_{x_j} Q_1)}\\
&= \inner{\eta}{\partial_{x_j} Q_1} + \inner{{\rm{Mod}}_{\varepsilon}}{\partial_{x_j} Q_1} - \inner{\varepsilon}{\dot{z}_1\cdot \nabla \partial_{x_j} Q_1}\\
&= \inner{{\rm{Mod}}_{\varepsilon}}{\partial_{x_j} Q_1} - \inner{\varepsilon}{\dot{z}_1\cdot \nabla \partial_{x_j} Q_1} = 0.
\end{align*}
Therefore,
\begin{align*}
    |\dot{z}_{1,j}-\ell_{1,j}|\normt{\partial_{x_j}Q}^2 &\lesssim  |\dot{z}_2 - \ell_2|\int |\nabla Q_2(x)| |\nabla Q_1(x)| {\rm{d}}x  + |\dot{z}_1| \normt{\varepsilon}
\end{align*}
and so using \eqref{eq:m'} with $m=1$ and $m'=1/2$,
\begin{align*}
|\dot{z}_1-\ell_1| &\lesssim |\dot{z}_2 - \ell_2| e^{-\frac{1}{2}|z|} + |\dot{z}_1 - \ell_1| \normpro{\vec{\varepsilon}} + |\ell_1|\normpro{\vec{\varepsilon}}.
\end{align*}
Since $\normpro{\vec{\varepsilon}}\lesssim \gamma$ we have,
\begin{align*}
|\dot{z}_1-\ell_1| &\lesssim |\dot{z}_2 - \ell_2| e^{-\frac{1}{2}|z|} + |\ell_1|\normpro{\vec{\varepsilon}}.
\end{align*}
Similarly, it holds
\begin{align*}
    |\dot{z}_2 - \ell_2| \lesssim |\dot{z}_1 - \ell_1|e^{-\frac{1}{2}|z|} + |\ell_2| \normpro{\vec{\varepsilon}}.
\end{align*}
Thus for large $|z|$,
\begin{align*}
    |\dot{z}_k -\ell_k|\leq \sum_{k=1,2} |\dot{z}_k - \ell_k| &\lesssim (|\ell_1|+|\ell_2|)\normpro{\vec{\varepsilon}}.
\end{align*}
Proof of (iv). Using again \eqref{eq:orthogonal} and \eqref{eq:epsilon_t and eta_t} for $j=1, \cdots, d,$
 \begin{align*}
    &\frac{\rd}{\rd t}\inner{\eta}{\partial_{x_j} Q_1} = \inner{\partial_t\eta}{\partial_{x_j} Q_1} + \inner{\eta}{\partial_t(\partial_{x_j} Q_1)}\\
    &=\inner{\Delta \varepsilon - \varepsilon + f'(Q_1) \varepsilon}{\partial_{x_j} Q_1} + \inner{f(R+\varepsilon)-f(R)-f'(R)\varepsilon}{\partial_{x_j} Q_1} \\ 
    &+ \inner{(f'(R)-f'(Q_1))\varepsilon}{\partial_{x_j} Q_1}+ \inner{{\rm{Mod}}_{\eta}}{\partial_{x_j} Q_1} + \inner{G}{\pj Q_1} + \inner{D}{\pj Q_1} \\
    &- \inner{\eta}{\left(\dot{z}_1\cdot \nabla\right)\partial_{x_j} Q_1}.
\end{align*}
Observe that since $\pj Q_1$ satisfies $\Delta \pj Q_1 - \pj Q_1 + f'(Q_1)\pj Q_1  = 0,$ the first term is zero. For the second term, we use \eqref{f taylor expansion f'} and the $H^1$
sub-criticality of the exponent $p > 2$ to get,
\begin{align}\label{f(R+E)-f(R)-f'(R)E}
    |\inner{f(R+\varepsilon)-f(R) -f'(R)\varepsilon}{\pj Q_1}| \lesssim \normo{\varepsilon}^2.
\end{align}
For the third term, first note that using $p>2$ and Taylor expansion we get,
\begin{align*}
  \sum_{k=1,2}|f'(R) - f'(Q_k)||\partial_{x_j} Q_k| \lesssim |Q_2||Q_1|^{p-1} + |Q_1||Q_2|^{p-1}.
\end{align*}
Thus using \eqref{eq:gbound},
\begin{align}\label{f'(R)-f'(Q_1)}
    \left|\inner{(f'(R)-f'(Q_1))\varepsilon}{\pj Q_1}\right|
    \lesssim q(|z|)\normpro{\vec{\varepsilon}}.
\end{align}
For the fourth term, first observe that
\begin{align*}
    \inner{{\rm{Mod}}_\eta}{\pj Q_1} &= \dot{\ell}_{1,j}\normt{\pj Q_1}^2 + \inner{\dot{\ell}_2\cdot \nabla Q_2}{\pj Q_1} \\
    &\quad - \sum_{k=1,2} \inner{(\ell_k\cdot \nabla)((\dot{z}_k-\ell_k)\cdot \nabla) Q_k}{\pj Q_1}.
\end{align*}
Thus using Lemma \ref{nonlinear-interaction-lemma} and ~\eqref{Q orthogonality} for any $m\in (0,1),$
\begin{align*}
    \inner{{\rm{Mod}_{\eta}}}{\pj Q_1} &= \dot{\ell}_{1,j}\normt{\pj Q_1}^2  + O\left(\left|\dot{\ell}_2\right| e^{-m|z|}+ |\ell_2|\left|\dot{z}_2-\ell_2\right|e^{-m|z|}\right).
\end{align*}
Also using \eqref{eq:zkdot-lk} we get,
\begin{align*}
    |\ell_2|\left|\dot{z}_2-\ell_2\right| \lesssim (|\ell_1| + |\ell_2|) \normpro{\vec{\varepsilon}}
\end{align*}
and therefore,
\begin{align*}
    \inner{{\rm{Mod}_{\eta}}}{\pj Q_1} 
    &= \dot{\ell}_{1,j}\normt{\pj Q_1}^2  + O\left(|\dot{\ell}_2|e^{-m|z|}
    +(|\ell_1| + |\ell_2|) \normpro{\vec{\varepsilon}}\right).
\end{align*}
Using \eqref{eq:proj_q1} for $1<\theta<\min(p-1,2)$ we have
\begin{align*}
    \left|\frac{\inner{G}{\pj Q_1}}{\normt{\partial_{x_1}Q}^2} -   \frac{z_j}{|z|}g(|z|)\right|\lesssim e^{-\theta |z|}.
\end{align*}
Next using the definition of $D$ we get,
\begin{align*}
    \left|\inner{D}{\pj Q_1}\right| \lesssim |\ell_2|^2 e^{-m|z|} \lesssim (|\ell_1| + |\ell_2|)^2e^{-m|z|}.
\end{align*}
For the last term, we use \eqref{eq:zkdot-lk},
\begin{align*}
    \left| \inner{\eta}{(\dot{z}_1\cdot \nabla) \pj Q_1}\right|
     \lesssim (|\ell_1| + |\ell_2|)\normpro{\vec{\varepsilon}}.
\end{align*}
Gathering all these estimates,
\begin{align*}
\left|\dot{\ell}_1 +   \frac{z}{|z|} g(|z|)\right| &\lesssim |\dot{\ell}_2| e^{-m|z|} + \normpro{\vec{\varepsilon}}^2  + (|\ell_1| + |\ell_2|)^2e^{-m|z|} \\
&\quad + (|\ell_1| + |\ell_2|)\normpro{\vec{\varepsilon}} +e^{-\theta |z|}.
\end{align*}
Similarly by using $\inner{\eta}{\pj Q_2} = 0,$
\begin{align*}
\left|\dot{\ell}_2 -   \frac{z}{|z|} g(|z|)\right| &\lesssim |\dot{\ell}_1| e^{-m|z|} + \normpro{\vec{\varepsilon}}^2  + (|\ell_1| + |\ell_2|)^2e^{-m|z|} \\
&\quad + (|\ell_1| + |\ell_2|)\normpro{\vec{\varepsilon}} +e^{-\theta |z|}.
\end{align*}
For large enough $|z|$, these estimates imply \eqref{eq:refined lkdot}.
\newline
Proof of (v). By definition of $a_{1}^{\pm}$ we have,
\begin{align*}
    \frac{\rd}{\rd t}a_1^{\pm} &= \inner{\partial_t \vec{\varepsilon}}{\vec{Z}_1^{\pm}} + \inner{\vec{\varepsilon}}{\partial_t \vec{Z}_{1}^{\pm}}\\
    &= \pm \nu_0\inner{\partial_t \varepsilon}{ Y_1} + \inner{\partial_t \eta}{Y_1} + \inner{\vec{\varepsilon}}{\partial_t \vec{Z}_{1}^{\pm}}\\
    &= \pm \nu_0\inner{\eta}{ Y_1} \pm \nu_0\inner{{\rm{Mod}_{\varepsilon}}}{ Y_1} + \inner{\Delta \varepsilon - \varepsilon + f'(Q_1)\varepsilon}{Y_1}\\
    &\quad  + \inner{f(R+\varepsilon)-f(R)-f'(R)\varepsilon}{Y_1} + \inner{(f'(R)-f'(Q_1))\varepsilon}{Y_1}\\
    &\quad + \inner{{\rm{Mod}_{\eta}}}{Y_1} + \inner{G}{Y_1} + \inner{D}{Y_1} -\inner{\vec{\varepsilon}}{(\dot{z}_1\cdot \nabla)\vec{Z}_1^{\pm}}.
\end{align*}
Since $\mathcal{L}Y = -\nu_{0}^{2} Y,$ 
\begin{align*}
\pm \nu_0 \inner{\eta}{Y_1} + \inner{\Delta \varepsilon - \varepsilon + f'(Q_1)\varepsilon}{Y_1} = \pm \nu_0 a_{1}^{\pm}.
\end{align*}
Furthermore using the decay properties of $Y$ in Lemma \ref{eq:spectral properties} along with \eqref{f(R+E)-f(R)-f'(R)E}, \eqref{f'(R)-f'(Q_1)} and the Cauchy-Schwarz inequality we get,
\begin{align*}
\left|\inner{f(R+\varepsilon)-f(R)-f'(R)\varepsilon}{Y_1}\right| + \left|\inner{(f'(R)-f'(Q_1))\varepsilon}{Y_1}\right|  \lesssim  \normt{\varepsilon}^2 + e^{-\frac{3}{2}|z|}.
\end{align*}
From \eqref{eq:gbound} and definition of $D$ we get,
\begin{align*}
    |\inner{G}{Y_1}| +|\inner{D}{Y_1}| \lesssim q(|z|) + \sum_{k=1,2}|\ell_k|^2.
\end{align*}
For the remaining terms using \eqref{mod_eta, mod_epsilon}, \eqref{eq:zkdot-lk} and \eqref{eq:refined lkdot},
\begin{align*} 
\left|\inner{\rm{Mod}_{\varepsilon}}{Y_1}\right|  + \left|\inner{\rm{Mod}_{\eta}}{Y_1}\right| + |\inner{\vec{\varepsilon}}{(\dot{z}_1\cdot \nabla) \vec{Z}_{1}^{\pm}}| \lesssim \normpro{\vec{\varepsilon}}^2 + \sum_{k=1,2}|\ell_k|^2 + q(|z|).
\end{align*}
Thus combining the above estimates we get \eqref{eq:exponential_directions} for $k=1.$ We can similarly prove the estimate for $k=2.$ 
\end{proof}
\section{Backward Uniform Estimates}
Using the estimates derived in the previous section we now proceed to prove Theorem \ref{main theorem}. We argue by compactness, which allows us to show that $u(t)$ (a solution of \eqref{nlkg}) is  asymptotically equal to the sum of $2$ solitons. We first recall the following lemma in order to setup the initial conditions for the bootstrap argument.
\begin{lemma}\label{W lemma}
Let $\beta = -\frac{1}{2\nu_0} = \inner{\vec{Y}^-}{\vec{Z}^-}^{-1}<0.$ For any $(z_1,z_2,\ell_1,\ell_2)\in \mathbb{R}^{4d}$ with $|z|$ large enough, there exist linear maps,
$$B:\mathbb{R}^2 \to \mathbb{R}^2, \quad V_j:\mathbb{R}^2\to \mathbb{R}^2\quad \text{for } j=1,\cdots, d,$$
smooth in $(z_1,z_2,\ell_1,\ell_2)$ satisfying 
$$\norm{B-\beta {\rm{Id}}} \lesssim e^{-\frac{1}{2}|z|}, \quad \norm{V_j}\lesssim e^{-\frac{1}{2}|z|},$$
and such that the function $W(a_1,a_2):\mathbb{R}^{d}\to \mathbb{R}$ defined by 
$$W(a_1,a_2) = \sum_{k=1,2}\left[ B_k(a_1,a_2) Y_k + \sum_{j=1}^{d} V_{k,j}(a_1,a_2) \pj Q_k\right]$$
satisfies for all $k=1,2$ and $j=1,\cdots, d,$
$$\inner{W(a_1,a_2)}{\pj Q_k} = 0, \quad \inner{W(a_1,a_2)}{Y_k} = \beta a_k.$$
In particular setting, $\vec{W}(a_1,a_2) = \begin{pmatrix} W(a_1,a_2) \\ 
                                      -\nu_0 W(a_1,a_2)
                        \end{pmatrix}$ it holds that 
                        
\begin{align*}
    \inner{\vec{W}(a_1,a_2)}{\vec{Z}^{-}_k} = a_k \quad \text{and}\quad \inner{\vec{W}(a_1,a_2)}{\vec{Z}^{+}_k} = 0.
\end{align*}
\end{lemma}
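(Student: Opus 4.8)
The plan is to read the defining conditions on $W$ as a square linear system of size $2+2d$ for the scalar coefficients $(B_1,B_2)$ and $(V_{k,j})_{k=1,2,\ j=1,\dots,d}$, to recognise the system matrix as an exponentially small perturbation of an invertible block-diagonal matrix, and then to read off all the stated estimates together with one short computation.

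First I would substitute the ansatz $W(a_1,a_2)=\sum_{k=1,2}\left[ B_k Y_k+\sum_{j=1}^{d} V_{k,j}\,\pj Q_k\right]$ into the $2+2d$ relations $\inner{W}{\pj Q_k}=0$ and $\inner{W}{Y_k}=\beta a_k$. Using $\normt{Y}=1$, the orthogonality relations \eqref{Q orthogonality}, the identity $\inner{Y}{\pj Q}=0$ (true by parity, since $Y$ is radial and $\pj Q$ is odd in $x_j$), and translation invariance, the contributions coming from functions centred at the \emph{same} point $z_k$ assemble into the block-diagonal matrix $D=\mathrm{diag}(1,c_1,\dots,c_1,1,c_1,\dots,c_1)$ with $c_1=\normt{\partial_{x_1}Q}^2>0$. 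The remaining ``cross'' contributions are the inner products $\inner{Y_1}{Y_2}$, $\inner{Y_1}{\pj Q_2}$, $\inner{\partial_{x_i}Q_1}{\pj Q_2}$ and their symmetric analogues; by \eqref{q bound}, the decay of $Y$ from Lemma \ref{eq:spectral properties}, and the argument proving \eqref{eq:m'} (which only uses a pointwise bound $e^{-|\cdot|}$, valid here for $Y$ as well since $\sqrt{1+\nu_0^2}>1$), each of them is $\lesssim e^{-\frac{1}{2}|z|}$. Hence the system reads $M\vec c=\vec b$, where $\vec c$ is the coefficient vector (linear in $(a_1,a_2)$), $\vec b=(\beta a_1,0,\dots,0,\beta a_2,0,\dots,0)^{T}$, and $M=D+N$ with $\norm{N}\lesssim e^{-\frac{1}{2}|z|}$.

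Next, for $|z|$ large $M$ is invertible and $M^{-1}=D^{-1}+O(e^{-\frac{1}{2}|z|})$ by a Neumann series. Since the entries of $M$ are smooth functions of $(z_1,z_2)$ — differentiation under the integral sign being legitimate because of the exponential decay — the inverse $M^{-1}$, and hence $\vec c=M^{-1}\vec b$, is smooth in the parameters on the region $|z|\gg 1$ and in fact independent of $\ell_1,\ell_2$; this defines $B$ and the $V_j$ as linear maps of $(a_1,a_2)$ with the required smoothness. From $D^{-1}\vec b=(\beta a_1,0,\dots,0,\beta a_2,0,\dots,0)^{T}$ together with $\norm{M^{-1}-D^{-1}}\lesssim e^{-\frac{1}{2}|z|}$ one reads off $\norm{B-\beta\,{\rm{Id}}}\lesssim e^{-\frac{1}{2}|z|}$ and $\norm{V_j}\lesssim e^{-\frac{1}{2}|z|}$, while $\inner{W}{\pj Q_k}=0$ and $\inner{W}{Y_k}=\beta a_k$ hold by construction. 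For the ``in particular'' part, with $\vec W=(W,-\nu_0 W)$ and $\vec Z_k^{\pm}=(\pm\nu_0 Y_k,Y_k)$ one computes $\inner{\vec W}{\vec Z_k^{\pm}}=(\pm\nu_0-\nu_0)\inner{W}{Y_k}$, which is $0$ for the $+$ sign and $-2\nu_0\beta a_k=a_k$ for the $-$ sign because $\beta=-\frac{1}{2\nu_0}$.

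The only step needing genuine care is the bookkeeping in the second paragraph: checking that the ``same-centre'' part of the system is exactly $D$ and that every ``cross'' term is indeed $O(e^{-\frac{1}{2}|z|})$. Given \eqref{Q orthogonality}, \eqref{q bound}, \eqref{eq:m'}, and the parity of $Y$ and $\pj Q$, this is routine, and the rest is elementary linear algebra.
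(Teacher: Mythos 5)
Your proof is correct and follows the same route as the paper, which simply states that the maps are obtained by inverting a linear system for $|z|$ large (citing Lemma 4.1 of the reference for the details you write out: the block-diagonal main part with entries $1$ and $c_1$, the $O(e^{-\frac{1}{2}|z|})$ cross terms, and the Neumann-series inversion) and then performs the same final computation of $\inner{\vec W}{\vec Z_k^{\pm}}$ using $\beta=-\frac{1}{2\nu_0}$. Your write-up is a correct, self-contained version of exactly that argument.
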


\begin{proof} The existence of the linear maps follows from inverting a linear system for $|z|$ large enough. For proof see Lemma 4.1 in \cite{cte2019description}. For the last part using the definition of $\beta,$
\begin{align*}
    \inner{\vec{W}(a_1,a_2)}{\vec{Z}^{-}_k} = \inner{W(a_1,a_2)}{-\nu_0 Y_k} + \inner{-\nu_0 W(a_1,a_2)}{  Y_k} = -2\nu_0 \beta a_k = a_k
\end{align*}
and 
\begin{align*}
    \inner{\vec{W}(a_1,a_2)}{\vec{Z}^{+}_k} = \inner{W(a_1,a_2)}{\nu_0 Y_k} + \inner{-\nu_0 W(a_1,a_2)}{  Y_k} = 0.
\end{align*}
\end{proof}
Let $(T_n)_{n\geq 1}$ be an increasing sequence of $\mathbb{R}^{+}$ with $\lim_{n\to +\infty} T_{n} = +\infty$. Let $\bar{z}_{n}\in \R$ and $(a_{k,n})_{k=1,2} \in \mathbb{R}^2$ to be determined later. 
For any large $n$, we consider the solution $\vec{u}_{n}$ of~\eqref{nlkg} with initial data
\begin{equation}\label{defini}
\vec{u}_{n}(T_{n})=\vec{Q}_{1,n}+\vec{Q}_{2,n}+\vec{W}(a_{1,n},a_{2,n}),
\end{equation}
where
\begin{equation*}
\vec{Q}_{k,n}= \begin{pmatrix}
    Q(\cdot -z_{k,n})\\
    -(\ell_{k, n}\cdot \nabla)Q(\cdot -z_{k,n})
\end{pmatrix}
\end{equation*}
for $k=1,2.$ In lieu of Remark \ref{symmetric soliton}, we assume that $z_{1,n}(t) =-z_{2,n}(t)=\frac{1}{2}z_{n}(t).$ We claim the following uniform backward estimates.
\begin{proposition}\label{uniform backward estimates}
There exist $n_{0}>0$ and $T_0>0$ large enough, such that for all $n\ge n_{0}$, there exist $\bar{z}_n > 0$,  $a_{k,n} \in \mathcal{B}_{\mathbb{R}^2}(T_n^{-3/2})$ for $k=1,2$
with 
\begin{equation}\label{intial data}
\begin{split}
&\left|(\kappa g_0)^{-\frac{1}{2}}(\bar{z}_n)^{\frac{d-1}{4}} e^{\frac{1}{2}\bar{z}_n} - T_n\right| < T_n\log^{-\frac{1}{2}}T_n, \quad |z_n(T_n)| = \bar{z}_n,
\\
&\ell_{k,n}(T_n) =  (-1)^{k+1}\sqrt{\kappa g_0}(\bar{z}_n)^{-\frac{d-1}{4}}e^{-\frac{1}{2}\bar{z}_n} \mathbf{e}_1, \quad \vec{\varepsilon}_n(T_n) = \vec{W}(a_{1,n}^{-}(T_n), a_{2,n}^{-}(T_n)) \\
&a_{k,n}^{-}(T_n) = a_{k,n}, \quad a_{k,n}^{+}(T_n) =0
\end{split}
\end{equation}
and initial decomposition
\begin{align*}
    \vec{u}_{n}(T_n) = \vec{Q}_{1,n}(T_n) + \vec{Q}_{2,n}(T_n) + \vec{W}(a_{1,n}^{-}(T_n), a_{2,n}^{-}(T_n)) 
\end{align*}
such that the corresponding solution $\vec{u}_n$ of \eqref{nlkg} exists on $[T_0,T_n]$,
satisfies the decomposition of Lemma \ref{decomposition around 2 solitons},
\begin{equation*}
    \vec{u}_n(t,x) = \sum_{k=1,2}\vec{Q}_n(x-z_{k,n}(t)) + \vec{\varepsilon}_n(t,x)
\end{equation*}
and verifies the following uniform estimates for all $t\in [T_0,T_n]$
\begin{equation}\label{uniform bounds}
\begin{split}
    &||z_n(t)| - 2\log(t)| \lesssim \log\log t, \quad |\ell_n (t)|\lesssim t^{-1},\quad \normpro{\vec{\varepsilon}_n(t)} \lesssim t^{-1}\log^{-3/2}t.
\end{split}
\end{equation}
\end{proposition}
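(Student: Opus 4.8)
The plan is a bootstrap argument run backward in time from $T_n$, combining the energy method of \cite{nguyen2017strongly} with the two-soliton framework of \cite{cte2019description} (see also \cite{Martel-Raphael 2018}). Fix $n$ large. Since the data \eqref{intial data} meets the hypothesis \eqref{decom} of Lemma \ref{decomposition around 2 solitons} at $t=T_n$, the decomposition \eqref{u vec decomposition} persists on a maximal subinterval of $[T_0,T_n]$ containing $T_n$; on it put $a^{\pm}_{k,n}=\inner{\vec\varepsilon_n}{\vec Z^{\pm}_k}$ and let $T^{\ast}_n$ be the infimum of the times $\tau$ for which $\vec u_n$ exists on $[\tau,T_n]$, the decomposition stays valid there, and
\begin{equation*}
  \big||z_n(t)|-2\log t\big|\le K\log\log t,\qquad |\ell_n(t)|\le K\,t^{-1},\qquad \normpro{\vec\varepsilon_n(t)}\le K\,t^{-1}\log^{-3/2}t,
\end{equation*}
\begin{equation*}
  |a^{+}_{k,n}(t)|\le K\,t^{-2},\qquad |a^{-}_{k,n}(t)|\le t^{-3/2}\qquad(k=1,2),
\end{equation*}
where $K$ is a large constant fixed at the end. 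The goal is to show that, for $T_0$ and $n$ large and for a suitable choice of $(a_{1,n},a_{2,n})\in\overline{\mathcal{B}_{\mathbb{R}^{2}}(T_n^{-3/2})}$, all these bounds except the last are strictly improved on $[T^{\ast}_n,T_n]$; since they keep $\normpro{\vec u_n}$ uniformly bounded, local well-posedness then forces $T^{\ast}_n=T_0$, and the improved bounds are precisely \eqref{uniform bounds}. The prescription of $\bar z_n$ in \eqref{intial data} is itself legitimate because $\bar z\mapsto(\kappa g_0)^{-1/2}\bar z^{(d-1)/4}e^{\bar z/2}$ is increasing and maps $[1,+\infty)$ onto a half-line, so the first line of \eqref{intial data} is uniquely solvable for $T_n$ large.

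\textbf{Energy estimate for the remainder: the main difficulty.} The heart of the proof is a mixed energy--virial functional for $\vec\varepsilon_n$, roughly of the form
\begin{equation*}
  \mathcal{F}(t)=\tfrac12\Big(\normt{\eta}^2+\inner{\mathcal{L}_R\varepsilon}{\varepsilon}\Big)+\mathcal{N}(\varepsilon)+\mathcal{J}(t),\qquad \mathcal{L}_R=-\Delta+1-f'(R),
\end{equation*}
where $\mathcal{N}(\varepsilon)=O\big(\normo{\varepsilon}^{3}+\normo{\varepsilon}^{p+1}\big)$ is the super-quadratic part of the energy of $\vec\varepsilon_n$ and $\mathcal{J}(t)$ is a correction -- linear in $\varepsilon$, localized near $z_{1,n}$ and $z_{2,n}$, of size $O(\normpro{\vec\varepsilon_n}\,q(|z_n|))$, and incorporating a localized momentum contribution -- chosen so that in $\tfrac{\rd}{\rd t}\mathcal{F}$ the interaction forcing $G$ and $D$ from \eqref{eq:epsilon_t and eta_t}, which would otherwise generate non-integrable terms such as $\inner{\eta}{G}$ and $\inner{\ell_k\cdot\nabla Q_k}{G}$, cancel against one another and against the $z$- and $\ell$-modulations \eqref{eq:zkdot-lk}, \eqref{eq:refined lkdot}, leaving only the harmless remainder $\inner{\varepsilon}{\partial_t G}=O\big(\normpro{\vec\varepsilon_n}\,|\dot z_n|\,q(|z_n|)\big)$. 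With such a functional I expect a coercivity estimate
\begin{equation*}
  \mathcal{F}(t)\gtrsim\normpro{\vec\varepsilon_n(t)}^2-C\sum_{k=1,2}\big((a^{+}_{k,n}(t))^2+(a^{-}_{k,n}(t))^2\big)-C\,q(|z_n(t)|)^2,
\end{equation*}
from Lemma \ref{eq:spectral properties}(ii) localized around each $z_{k,n}$ (legitimate since $|z_n|\gg1$) together with the orthogonality \eqref{eq:orthogonal}, and, after the cancellations, a derivative bound $\big|\tfrac{\rd}{\rd t}\mathcal{F}(t)\big|\lesssim \tfrac{c}{t}\normpro{\vec\varepsilon_n(t)}^2+R(t)$ with $c$ an $O(1)$ constant and $R\ge0$ integrable, $\int_t^{+\infty}R(s)\,\rd s=o(t^{-2}\log^{-3}t)$. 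Integrating from $t$ to $T_n$, using $\mathcal{F}(T_n)\lesssim\normpro{\vec\varepsilon_n(T_n)}^2\lesssim T_n^{-3}$ (since $\vec\varepsilon_n(T_n)=\vec W(a_{1,n},a_{2,n})$ with $|a_{k,n}|\le T_n^{-3/2}$, by Lemma \ref{W lemma}), a Gronwall argument for the $\tfrac{c}{t}\normpro{\vec\varepsilon_n}^2$ term, and the control of $a^{\pm}_{k,n}$ below, should recover $\normpro{\vec\varepsilon_n(t)}\le\tfrac12K\,t^{-1}\log^{-3/2}t$, improving the third bootstrap bound. The constraint $1\le d\le5$ enters precisely here (and in the ODE step): by \eqref{q bound}, $q(|z_n|)\sim|z_n|^{-(d-1)/2}e^{-|z_n|}\sim(\log t)^{-(d-1)/2}t^{-2}$, so $\normpro{\vec\varepsilon_n}^2=o\big(q(|z_n|)\big)$ and every quadratic-in-$\vec\varepsilon_n$ error is genuinely subleading. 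Constructing $\mathcal{J}$ and verifying these cancellations is, I expect, the principal obstacle.

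\textbf{ODE for the geometric parameters.} With the symmetric ansatz $z_{1,n}=-z_{2,n}=\tfrac12z_n$, $\ell_{1,n}=-\ell_{2,n}=\tfrac12\ell_n$, \eqref{eq:zkdot-lk} gives $\dot z_n=\ell_n+O\big(\normpro{\vec\varepsilon_n}|\ell_n|\big)$ and \eqref{eq:refined lkdot} gives $\dot\ell_n=-2\,\tfrac{z_n}{|z_n|}g(|z_n|)+O\big(\normpro{\vec\varepsilon_n}^2+|\ell_n|^2e^{-m|z_n|}+e^{-\theta|z_n|}+|\ell_n|\normpro{\vec\varepsilon_n}\big)$. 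The unperturbed model is the conservative ODE $\ddot\zeta=-2\,\tfrac{\zeta}{|\zeta|}g(|\zeta|)$, whose distinguished solution emanating from the configuration prescribed in \eqref{intial data} satisfies $(\kappa g_0)^{-1/2}|\zeta(t)|^{(d-1)/4}e^{|\zeta(t)|/2}=t+O(1)$, hence $|\zeta(t)|=2\log t-\tfrac{d-1}{2}\log\log t+O(1)$ and $|\dot\zeta(t)|=2t^{-1}(1+o(1))$. Controlling the perturbation through the first integral $e(t):=\tfrac12|\ell_n(t)|^2-2G_{\ast}(|z_n(t)|)$, where $G_{\ast}'=g$ (so $G_{\ast}(r)\sim g_0q(r)$ by \eqref{q bound} and a Laplace-type estimate), whose derivative is $O\big(|\ell_n|(\normpro{\vec\varepsilon_n}^2+e^{-\theta|z_n|}+|\ell_n|\normpro{\vec\varepsilon_n})\big)$ and therefore integrates to something $\ll q(|z_n|)$ for $d\le5$, and then running a continuity argument anchored at \eqref{intial data}, I would obtain $\big||z_n(t)|-2\log t\big|\le\tfrac12K\log\log t$ and $|\ell_n(t)|\le\tfrac12K\,t^{-1}$ on $[T^{\ast}_n,T_n]$, improving the first two bootstrap bounds.

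\textbf{Unstable directions and conclusion.} By \eqref{eq:exponential_directions} and the bootstrap bounds, $\big|\tfrac{\rd}{\rd t}a^{\pm}_{k,n}\mp\nu_0a^{\pm}_{k,n}\big|\lesssim\normpro{\vec\varepsilon_n}^2+|\ell_n|^2+q(|z_n|)\lesssim t^{-2}$. Since $a^{+}_{k,n}(T_n)=0$ and $a^{+}_{k,n}$ is the direction that is stable in reversed time, integrating backward gives $|a^{+}_{k,n}(t)|\lesssim t^{-2}$ with an absolute constant, hence $\le\tfrac12K\,t^{-2}$ for $K$ large, improving the fourth bound. Only $|a^{-}_{k,n}(t)|\le t^{-3/2}$ remains, and that direction is unstable in reversed time; I would close it by a Brouwer argument over $(a_{1,n},a_{2,n})\in\overline{\mathcal{B}_{\mathbb{R}^{2}}(T_n^{-3/2})}$. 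A direct computation from the ODE above shows that whenever $t^{3}(a^{-}_{k,n}(t))^2=1$ this quantity is strictly decreasing in $t$ (the $-2\nu_0$ from the linear part dominating the $+3/t$ from the rescaling, for $t\ge T_0$), so if for some parameter the bound $|a^{-}_{k,n}|\le t^{-3/2}$ fails there is a largest exit time $t^{\ast}\in(T_0,T_n]$, the map $(a_{1,n},a_{2,n})\mapsto\big((t^{\ast})^{3/2}a^{-}_{1,n}(t^{\ast}),(t^{\ast})^{3/2}a^{-}_{2,n}(t^{\ast})\big)$ is continuous and is the identity on the boundary of the parameter ball, and such a retraction of the ball onto its boundary is impossible; hence some $(a_{1,n},a_{2,n})$ keeps $|a^{-}_{k,n}(t)|\le t^{-3/2}$ on all of $[T^{\ast}_n,T_n]$ (if one also imposes the full symmetry $a_{1,n}=a_{2,n}$ this reduces to the intermediate value theorem). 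For this choice every bootstrap bound but the last has been strictly improved, whence $T^{\ast}_n=T_0$, and \eqref{uniform bounds} holds uniformly for $n\ge n_0$.
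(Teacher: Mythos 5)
Your overall architecture (backward bootstrap, modulation equations, a corrected energy--momentum functional for $\vec\varepsilon$, direct integration for $a^{+}_{k}$, Brouwer shooting for $a^{-}_{k}$, then compactness elsewhere) matches the paper, and your energy functional sketch has the right shape: the paper's $\mathcal{W}=\mathcal{E}+2\mathcal{J}-2\mathcal{S}$ with $\mathcal{S}=\inner{G+D}{\varepsilon}$ is exactly the object you describe. But there is one genuine and decisive gap: you cannot close the bootstrap bound on $z_n$ (nor, consequently, on $\ell_n$) by ``a continuity argument anchored at \eqref{intial data}''. The parameter $\bar z_n$ must itself be a shooting parameter, on the same footing as $(a_{1,n},a_{2,n})$; the paper's topological argument runs over the three-dimensional set $\mathbb{D}=[-1,1]\times\mathcal{B}_{\mathbb{R}^2}(1)$, with the extra coordinate $\hat\zeta$ parametrizing $\zeta(T_n)=T_n+\hat\zeta\,T_n\log^{-1/2}T_n$, $\zeta=(\kappa g_0)^{-1/2}|z|^{(d-1)/4}e^{|z|/2}$, and uses the no-retraction theorem rather than a two-dimensional Brouwer argument. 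Your remark that the first line of \eqref{intial data} is ``uniquely solvable'' misreads the statement: the slack $T_n\log^{-1/2}T_n$ is not slack, it is the shooting window.

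Here is why direct integration fails. The best available asymptotics \eqref{q bound} and \eqref{eq:sharp_asymp} carry relative errors $O(r^{-1})$, so even after exploiting the first integral the modulation system only yields $\dot\zeta(t)=1+O(\log^{-1}t)$ (this is \eqref{zeta derivative} in the paper, and it is sharp). Integrating backward from $T_n$ gives
\begin{equation*}
\zeta(t)-t=\bigl(\zeta(T_n)-T_n\bigr)+O\!\left(\int_t^{T_n}\frac{\rd s}{\log s}\right)=\bigl(\zeta(T_n)-T_n\bigr)+O\!\left(\frac{T_n}{\log T_n}\right),
\end{equation*}
and the accumulated drift $T_n/\log T_n$ overwhelms any target of the form $t\log^{-1/2}t$ or even $t(\log t)^{C}$ as soon as $t\ll T_n/\log T_n$; since $T_0$ is fixed and $T_n\to\infty$, the bound on $\zeta-t$, hence on $|z_n(t)|-2\log t$ and on $|\ell_n(t)|\approx 2/\zeta(t)$, is lost. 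The first integral $e(t)$ controls only the relation between $|\ell_n|$ and $|z_n|$ (the energy of the orbit), not the phase $\zeta-t$ along the zero-energy orbit, and it is precisely this neutrally-drifting phase that must be corrected by adjusting $\bar z_n$; the transversality needed for the degree argument is supplied by the functional $\xi=(\zeta-t)^2t^{-2}\log t$, which satisfies $\dot\xi\approx -2t^{-1}<0$ at any exit time. A secondary consequence of the same gap: your bootstrap hypothesis $\bigl||z_n(t)|-2\log t\bigr|\le K\log\log t$ is too weak to run the scheme, since it only gives $q(|z_n|)\lesssim t^{-2}(\log t)^{CK}$ rather than $q(|z_n|)\lesssim t^{-2}$; the quantity actually propagated in the bootstrap must be the sharper $|\zeta(t)-t|\le t\log^{-1/2}t$, from which $||z_n|-2\log t|\lesssim\log\log t$ is deduced a posteriori via \eqref{z implies inequalities}.
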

\noindent
For the sake of simplicity we drop the index $n$ (except for $T_n$) in the following sections. Our goal now is to prove Proposition \ref{uniform backward estimates} using a bootstrap argument, integration of a differential system of geometrical parameters and energy estimates.
\subsection{Bootstrap setting}
The proof of Proposition \ref{uniform backward estimates} is based on the following bootstrap estimates, for $C^{*}\gg1$ to be chosen later,
\begin{equation}\label{bootstrap}
\begin{split}
|(\kappa g_0)^{-1/2}|z(t)|^{\frac{d-1}{4}}e^{\frac{|z(t)|}{2}} - t| \leq t \log^{-1/2}t, \quad \sum_{k=1,2}|\ell_k(t)|\leq 4t^{-1}, \\
 \sum_{k=1,2} |a_k^{+}(t)| \leq t^{-3/2},\quad \sum_{k=1,2} |a_k^{-}(t)| \leq t^{-3/2},\quad \normpro{\vec{\varepsilon}(t)} \leq C^{*}t^{-1}\log^{-3/2}t.
\end{split}
\end{equation}
Note that the estimate on $z$ gives us a more precise estimate,
\begin{equation}\label{z implies inequalities}
  |z(t)| = 2\log t - \frac{d-1}{2}\log\log t - C + O(\log^{-1/2}t),
\end{equation}
where $C>0$ is a constant depending only on $d$ and $p.$ We can thus deduce, 
\begin{align*}
   ||z(t)| - 2\log t|\lesssim \log\log t.
\end{align*}
Let,
$$T^{*}(\bar{z}_{n},a_{1,n},a_{2,n})= \inf\{t\in [T_0, T_n]: \vec{u}(t)\ \mbox{satisfies}~\eqref{decom}\ \mbox{and}~\eqref{bootstrap}\ \mbox{on}\ [t,T_n]\},$$
where $\vec{u}$ is the solution of~\eqref{nlkg} with initial data $\vec{u}_{n}(T_{n})$ given by~\eqref{defini}. 
Next we derive some inequalities that will allow us to improve the bootstrap estimates.
\subsection{Modulation Equations.}
\begin{lemma}\label{control of modulation equations}
	For all $t\in [T^{*},T_{n}]$, the following hold.
	\begin{enumerate}
		\item Estimates on $z_{k}$ and $z$. We have
	\begin{equation}\label{est:z}
	\sum_{k=1,2}\left|\dot{z}_k - \ell_k\right| \lesssim t^{-2}\log^{-1}t,\quad 
	|q(|z|)|\lesssim t^{-2}.
	\end{equation}
	\item Estimates on $\ell_{k}$. We have
	\begin{equation}\label{est:l}
\sum_{k=1,2}\left||\dot{\ell}_k| - t^{-2}\right|\lesssim t^{-2}\log^{-1}t,\quad 
	\sum_{k=1,2}\left||\ell_k| - t^{-1}\right|\lesssim t^{-1}\log^{-1}t.
	\end{equation}
	\item Estimates on $\partial_{t}G$ and $\partial_{t}D$. We have 
	\begin{equation}\label{est:ptGD}
	\|\partial_{t}G\|_{L^{2}}+\|\partial_{t}D\|_{L^{2}}\lesssim t^{-3}.
	\end{equation}
\end{enumerate}
\end{lemma}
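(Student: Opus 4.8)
The plan is to feed the bootstrap bounds \eqref{bootstrap} into the structural estimates of Lemma \ref{decomposition around 2 solitons} and then upgrade them using the more precise information \eqref{z implies inequalities}. First I would record the consequences of the bootstrap assumptions that will be used throughout: from the first line of \eqref{bootstrap} and \eqref{z implies inequalities} we have $|z(t)| = 2\log t + O(\log\log t)$, hence $e^{-|z|} \sim t^{-2}(\log t)^{(d-1)/2}$ and, by \eqref{q bound}, $q(|z|) \sim \kappa |z|^{-(d-1)/2}e^{-|z|} \sim \kappa g_0^{-1}\, t^{-2}$ up to the $\log^{-1/2}t$ error; in particular $q(|z|)\lesssim t^{-2}$, which is the second half of \eqref{est:z}. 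Also $\normpro{\vec\varepsilon}\lesssim t^{-1}\log^{-3/2}t$ and $|\ell_k|\lesssim t^{-1}$, so $\normpro{\vec\varepsilon}^2 \lesssim t^{-2}\log^{-3}t$, $(|\ell_1|+|\ell_2|)^2 \lesssim t^{-2}$, and $(|\ell_1|+|\ell_2|)\normpro{\vec\varepsilon}\lesssim t^{-2}\log^{-3/2}t$. For the exponential terms of the form $e^{-\theta|z|}$ with $1<\theta<\min(p-1,2)$ we get $e^{-\theta|z|}\lesssim t^{-2\theta}(\log t)^{O(1)}\lesssim t^{-2}\log^{-1}t$ since $\theta>1$; similarly $e^{-m|z|}$ with $m$ close to $1$ is dominated by a power of $t$ strictly better than $t^{-2}\log^{-1}t$ once $m$ is chosen sufficiently close to $1$ (this is where we use the freedom in choosing $m\in(0,1)$ and $\theta>1$).

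Next I would prove (i). Plugging the above into \eqref{eq:zkdot-lk} gives $|\dot z_k - \ell_k| \lesssim \normpro{\vec\varepsilon}(|\ell_1|+|\ell_2|) \lesssim t^{-1}\log^{-3/2}t \cdot t^{-1} = t^{-2}\log^{-3/2}t \lesssim t^{-2}\log^{-1}t$, which is the first half of \eqref{est:z}; the second half was already noted. Then for (ii): the main term in \eqref{eq:refined lkdot} is $(-1)^k \frac{z}{|z|}g(|z|)$, and by \eqref{eq:sharp_asymp} together with the refined expansion \eqref{z implies inequalities} one has $g(|z|) = g_0 q(|z|)(1+O(|z|^{-1})) = t^{-2}(1 + o(1))$ — here I would carefully track the constants: from the bootstrap bound on $z$, $(\kappa g_0)^{-1/2}|z|^{(d-1)/4}e^{|z|/2} = t(1+O(\log^{-1/2}t))$, so $\kappa |z|^{-(d-1)/2}e^{-|z|} = \kappa (\kappa g_0)^{-1} t^{-2}(1+O(\log^{-1/2}t)) = g_0^{-1}t^{-2}(1+O(\log^{-1/2}t))$, hence $g_0 q(|z|) = t^{-2}(1+O(\log^{-1/2}t))$. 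All the error terms on the right of \eqref{eq:refined lkdot} are $\lesssim t^{-2}\log^{-1}t$ by the previous paragraph, so $\big||\dot\ell_k| - t^{-2}\big|\lesssim t^{-2}\log^{-1}t$. Integrating this from $t$ to $T_n$ and using the initial condition $\ell_k(T_n) = (-1)^{k+1}\sqrt{\kappa g_0}\,\bar z_n^{-(d-1)/4}e^{-\bar z_n/2}\mathbf e_1$, which by the constraint on $\bar z_n$ equals $(-1)^{k+1}T_n^{-1}(1+o(1))\mathbf e_1$, gives $|\ell_k(t)| = t^{-1}(1+O(\log^{-1}t))$, i.e. the second estimate in \eqref{est:l}. (I should double-check that the direction of $\ell_k$ stays essentially along $\mathbf e_1$, or rather just bound $\big||\ell_k|-t^{-1}\big|$ directly by integrating the scalar inequality, which avoids any issue with the vector direction.)

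Finally, for (iii) I would differentiate $G = f(R) - f(Q_1) - f(Q_2)$ and $D = -\sum_k (\ell_k\cdot\nabla)^2 Q_k$ in $t$. Since $\partial_t Q_k = -(\dot z_k\cdot\nabla)Q_k$ and $\dot z_k = \ell_k + O(t^{-2}\log^{-1}t)$ with $|\ell_k|\lesssim t^{-1}$, we get $\partial_t G = f'(R)\partial_t R - f'(Q_1)\partial_t Q_1 - f'(Q_2)\partial_t Q_2$, which after collecting terms is a sum of cross terms each supported where both $Q_1$ and $Q_2$ are non-negligible, multiplied by $|\dot z_k|\lesssim t^{-1}$; by the same interaction estimates used for \eqref{eq:gbound} (e.g. \eqref{eq:m'}, \eqref{q_1 q_2^1+m estimate}) these have $L^2$ norm $\lesssim t^{-1}q(|z|)\lesssim t^{-1}\cdot t^{-2} = t^{-3}$. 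For $D$, $\partial_t D = -\sum_k\big[2(\dot\ell_k\cdot\nabla)(\ell_k\cdot\nabla)Q_k - (\ell_k\cdot\nabla)^2(\dot z_k\cdot\nabla)Q_k\big]$, and using $|\dot\ell_k|\lesssim t^{-2}$, $|\ell_k|\lesssim t^{-1}$, $|\dot z_k|\lesssim t^{-1}$ together with the boundedness of the relevant derivatives of $Q$ in $L^2$, each term is $\lesssim t^{-2}\cdot t^{-1} = t^{-3}$, giving \eqref{est:ptGD}. I expect the main obstacle to be the bookkeeping of constants in part (ii): one must convert the implicit definition of the bootstrap region for $z$ into the asymptotics $g(|z|)=t^{-2}(1+o(1))$ with the correct normalization and then integrate cleanly, making sure the error incurred in passing from $|\dot\ell_k|$ to $|\ell_k|$ does not degrade the $\log^{-1}t$ gain; everything else is a direct substitution of the bootstrap bounds into Lemma \ref{decomposition around 2 solitons} and the interaction estimates.
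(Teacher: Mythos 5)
Your proposal follows the same route as the paper's own proof: part (i) and the bound on $q(|z|)$ by direct substitution of the bootstrap bounds into \eqref{eq:zkdot-lk} and \eqref{q bound}; part (ii) by combining \eqref{eq:refined lkdot} with \eqref{eq:sharp_asymp}, \eqref{q bound} and the bootstrap constraint on $z$ to get $\big||\dot{\ell}_k|-t^{-2}\big|\lesssim t^{-2}\log^{-1}t$ and then integrating backward from the prescribed data at $T_n$; and part (iii) by differentiating $G$ and $D$ explicitly and reusing the interaction estimates. The paper's argument is essentially identical, including the points you flag (the scalar-versus-vector integration of $\dot{\ell}_k$ and the constant bookkeeping), so no changes are needed.
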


\begin{proof}
	Proof of~\eqref{est:z}. For the first estimate, using \eqref{eq:zkdot-lk} and \eqref{bootstrap} for large enough $T_0$ depending on $C^*$,
\begin{align*}
       \sum_{k=1,2} |\dot{z}_k - \ell_k| &\lesssim (|\ell_1|+|\ell_2|)\normpro{\vec{\varepsilon}}\lesssim C^* t^{-2}\log^{-3/2}t \lesssim t^{-2}\log^{-1}t.
\end{align*} For the second estimate, observe that from \eqref{bootstrap} and \eqref{q bound} we have,
\begin{align*}
        \left|q(|z|)\right| &\lesssim  \left|q(|z|) - \kappa |z|^{-\frac{d-1}{2}}e^{-|z|}\right| + \kappa |z|^{-\frac{d-1}{2}}e^{-|z|} \lesssim t^{-2}.
\end{align*}

Proof of~\eqref{est:l}. For the first estimate note that using \eqref{eq:sharp_asymp}, \eqref{eq:refined lkdot}, \eqref{bootstrap} and \eqref{z implies inequalities} we get, 
\begin{align*}
    \left| \dot{\ell}_k - g_0(-1)^{k} \frac{z}{|z|}q(|z|)\right| &\lesssim \left| \dot{\ell}_k - (-1)^{k} \frac{z}{|z|}g(|z|)\right| + \left|g(|z|) - g_0q(|z|)\right|\\
    &\lesssim \normpro{\vec{\varepsilon}}^2  +   (|\ell_1| + |\ell_2|)^2 e^{-m|z|} + e^{-\theta |z|}  \\
    &\quad  + (|\ell_1| + |\ell_2|) \normpro{\vec{\varepsilon}} + |z|^{-1}q(|z|)\\
    &\lesssim \normpro{\vec{\varepsilon}}^2 + t^{-2-2m} + t^{-2\theta} + t^{-1}\normpro{\vec{\varepsilon}}\\
    &\quad  + t^{-2}\log^{-1}t\lesssim t^{-2} \log^{-1}t,
\end{align*}
where $1<\theta<\min(p-1,2), $ $m\in (0,1)$ and $k=1,2.$ Thus using \eqref{q bound},
\begin{align*}
    &\left|\dot{\ell}_k - \kappa g_0(-1)^k \frac{z}{|z|}|z|^{-\frac{d-1}{2}}e^{-|z|}\right|\\
    \lesssim& \left|\dot{\ell}_k -g_0(-1)^k\frac{z}{|z|}q(|z|)\right| + \left|q(|z|)-\kappa |z|^{-\frac{d-1}{2}}e^{-|z|}\right| \lesssim t^{-2}\log^{-1}t.
\end{align*}
On using \eqref{bootstrap} we get,
$\sum_{k=1,2} \left| |\dot{\ell}_k| - t^{-2}\right|\lesssim t^{-2} \log^{-1}t.$ Note that using the triangle inequality we can also infer that,
\begin{equation}\label{ldot +2g_0}
    \left|\dot{\ell} + 2\kappa g_0 \frac{z}{|z|}|z|^{-\frac{d-1}{2}}e^{-|z|}\right|
    \lesssim t^{-2}\log^{-1}t,
\end{equation}
which in turn implies that $\left| |\dot{\ell}| - 2t^{-2}\right|\lesssim t^{-2} \log^{-1}t.$ We will use these estimates on $\ell$ in later sections.

For the second estimate, by integrating $\left| |\dot{\ell}_k| - t^{-2}\right|\lesssim t^{-2} \log^{-1}t$ on the interval $[t, T_n]$ with the initial data in \eqref{intial data} we get, $\sum_{k=1,2} \left| |\ell_k| - t^{-1}\right|\lesssim t^{-1} \log^{-1}t.$ 

Proof of~\eqref{est:ptGD}. By direct computation,
$$\partial_t{D} = -\sum_{j=1,2} \left[2(\dot{\ell}_j\cdot \nabla )(\ell_j\cdot \nabla Q_j)- (\ell_j\cdot \nabla )(\ell_j\cdot \nabla )(\dot{z}_j\cdot \nabla Q_j)\right].$$
Thus using \eqref{est:l} and \eqref{est:z} we get $\normt{\partial_t D}\lesssim t^{-3}.$ Similarly,
$$\partial_t{G} = -f'(Q_1+Q_2)(\dot{z}_1 \cdot \nabla Q_1 + \dot{z}_2 \cdot \nabla Q_2) + \sum_{k=1,2} f'(Q_k)(\dot{z}_k\cdot \nabla Q_k).$$
Thus using \eqref{f'(R)-f'(Q_1)} and \eqref{est:z} we get 
$$\normt{\partial_t G}\lesssim q(|z|)\sum_{k=1,2}|\dot{z}_k|\lesssim t^{-3}$$ and hence we obtain the desired inequality.

\end{proof}
\subsection{Energy estimates}
Consider the energy functional for $\vec{\varepsilon}=(\varepsilon,\eta)$,
\begin{equation}
        \mathcal{E}(t) = \int \{ |\nabla \varepsilon|^2 + \varepsilon^2 + \eta^2   -2[F(R+\varepsilon) -F(R)-f(R)\varepsilon]\}.
    \end{equation}
Let $\chi:[0,+\infty)\to [0,+\infty)$ be a smooth and non-increasing function with $\chi \equiv 1$ on $[0,1/10],$ $\chi \equiv 0$ on $[1/8, +\infty).$ Set
\begin{equation*}
    \mathcal{J}(t) = \sum_{k=1,2} J_k(t),\quad J_k(t) =  \int (\ell_k\cdot\nabla \varepsilon)\eta \chi_k,
\end{equation*}
where
\begin{equation*}
\chi_{k}(t,x)=\chi\left(\log^{-1}(t)|x-z_k(t)|\right),\quad \mbox{for}\ k=1,2.
\end{equation*}
Note that, from~\eqref{est:z},
\begin{equation}\label{est:derchi}
|\partial_{t}\chi_{k}(t,x)|\lesssim \frac{\boldsymbol{1}_{\Omega_{k}}(t,x)}{t\log t},\quad 
|\nabla_{x} \chi_{k}(t,x)|\lesssim\frac{\boldsymbol{1}_{\Omega_{k}}(t,x)}{\log t},
\end{equation}
where
\begin{equation*}
\Omega_{k}(t,x)=\left\{x\in \R^{d}:|x-z_{k}(t)|\le \frac{1}{8}\log t\right\}.
\end{equation*}
Let 
\begin{equation}\label{defn: S}
    \mathcal{S}(t) = \inner{G(t)}{\varepsilon(t)}+\inner{D(t)}{\varepsilon(t)}.
\end{equation}
Last, we set
\begin{equation*}
    \mathcal{W}(t) = \mathcal{E}(t) + 2 \mathcal{J}(t) - 2\mathcal{S}(t).
\end{equation*}
\begin{lemma}
For all $t \in [T^*, T_{n}]$, the following hold.
    \begin{enumerate}
        \item \emph{Coercivity.}
        \begin{equation}\label{coercivity}
             \normpro{\vec{\varepsilon}(t)}^2 \lesssim \mathcal{W}(t) +O(t^{-3}).
        \end{equation}
        \item \emph{Time variation.}
        \begin{equation}\label{time variation}
           \left|\frac{\rd}{\rd t}\mathcal{W}\right| \lesssim C^{*}t^{-3}\log^{-3}t.
        \end{equation}
    \end{enumerate}
Note that in both estimates, the implicit constant is independent of $C^{*}.$
\end{lemma}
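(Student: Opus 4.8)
The plan is to prove the two assertions in turn, both resting on the algebraic structure of $\mathcal{W} = \mathcal{E} + 2\mathcal{J} - 2\mathcal{S}$ and the modulation estimates of Lemma \ref{control of modulation equations}.

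\emph{Coercivity.} First I would expand $\mathcal{E}(t)$ using the Taylor expansion \eqref{f taylor expansion f'} of $f$ around $R$: writing $F(R+\varepsilon)-F(R)-f(R)\varepsilon = \tfrac12 f'(R)\varepsilon^2 + O(|\varepsilon|^3 + |\varepsilon|^{p+1})$, we get
$$
\mathcal{E}(t) = \int\{|\nabla\varepsilon|^2 + \varepsilon^2 + \eta^2 - f'(R)\varepsilon^2\} + O(\normo{\varepsilon}^3 + \normo{\varepsilon}^{p+1}),
$$
using $H^1$-subcriticality for the cubic and $p{+}1$ remainders. Next, since $f'(R) = pR^{p-1}$ and $f'(Q_1)+f'(Q_2) - f'(R) = O(|Q_1Q_2|^{\min(1,p-1)})$, I replace $f'(R)$ by $f'(Q_1)+f'(Q_2)$ at the cost of $e^{-m'|z|}\normt{\varepsilon}^2 \lesssim t^{-2}\normt{\varepsilon}^2$ by \eqref{eq:m'} and \eqref{bootstrap}; this splits the quadratic form into $\langle \mathcal{L}_1\varepsilon,\varepsilon\rangle + \langle\mathcal{L}_2\varepsilon,\varepsilon\rangle$ up to lower order (the overlap term $\int\varepsilon^2$ counted twice is again $O(t^{-2}\normt{\varepsilon}^2)$-controlled after a localization, or absorbed using a partition of unity $\chi_k$). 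Then I apply the coercivity part of Lemma \ref{eq:spectral properties} localized near each $z_k$: the orthogonality conditions \eqref{eq:orthogonal} kill the $\langle\varepsilon,\pj Q_k\rangle$ terms, while the $\langle\varepsilon,Y_k\rangle$ terms are handled because $\vec{\varepsilon}(t)$ lies (via the bootstrap on $a_k^\pm$) close to the range of $\vec{W}$, so $|\langle\varepsilon,Y_k\rangle| \lesssim |a_k^+|+|a_k^-| \lesssim t^{-3/2}$, whence $\langle\varepsilon,Y_k\rangle^2 \lesssim t^{-3}$. Adding the $\eta^2$ term directly, and controlling $|\mathcal{J}(t)| \lesssim (|\ell_1|+|\ell_2|)\normpro{\vec\varepsilon}^2 \lesssim t^{-1}\normpro{\vec\varepsilon}^2$ and $|\mathcal{S}(t)| \lesssim (\normt{G}+\normt{D})\normt{\varepsilon} \lesssim (q(|z|) + t^{-2})\normpro{\vec\varepsilon} \lesssim t^{-2}\normpro{\vec\varepsilon}$ by \eqref{eq:gbound}, \eqref{est:l} and \eqref{est:z}, both of these are absorbed into the left side for $T_0$ large (after a Young inequality for $\mathcal{S}$, which produces an $O(t^{-4})$ harmless term). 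This yields $\normpro{\vec\varepsilon(t)}^2 \lesssim \mathcal{W}(t) + O(t^{-3})$, with implicit constant independent of $C^*$ since none of these steps used the upper bound on $\normpro{\vec\varepsilon}$ beyond $\lesssim\gamma$.

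\emph{Time variation.} I would differentiate each piece of $\mathcal{W}$ in $t$, using \eqref{eq:epsilon_t and eta_t} for $\partial_t\varepsilon,\partial_t\eta$. The computation of $\tfrac{\rd}{\rd t}\mathcal{E}$ produces, after integration by parts, the principal term $-2\int(\Delta\varepsilon - \varepsilon + f(R+\varepsilon)-f(R))\,\mathrm{Mod}_\varepsilon - 2\int(G+D+\mathrm{Mod}_\eta)\eta + (\text{terms from } \partial_t R \text{ hitting } f'(R))$; the key cancellation is that the $-2\langle G+D,\eta\rangle$ term is exactly compensated by $2\tfrac{\rd}{\rd t}\mathcal{S}$ up to $2\langle \partial_t G + \partial_t D,\varepsilon\rangle + 2\langle G+D,\mathrm{Mod}_\varepsilon\rangle$, and the former is $O(t^{-3}\normpro{\vec\varepsilon}) = O(C^* t^{-4}\log^{-3/2}t)$ by \eqref{est:ptGD}, the latter $O(t^{-2}\cdot t^{-1}|\ell|) = O(t^{-5})$. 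The term $2\tfrac{\rd}{\rd t}\mathcal{J}$ is designed to absorb the worst remaining contribution, namely $-2\int (\Delta\varepsilon-\varepsilon)\,\mathrm{Mod}_\varepsilon$-type and $\langle D,\eta\rangle$-type pieces of size $|\ell|\cdot|\dot\ell|\cdot\|\cdot\| \sim t^{-1}\cdot t^{-2}$: differentiating $J_k = \int(\ell_k\cdot\nabla\varepsilon)\eta\chi_k$ gives $\dot\ell_k$-terms, $\partial_t\chi_k$-terms controlled by \eqref{est:derchi}, and the main $\int(\ell_k\cdot\nabla\partial_t\varepsilon)\eta\chi_k + \int(\ell_k\cdot\nabla\varepsilon)\partial_t\eta\,\chi_k$ which after substituting the equations and integrating by parts cancels the localized Hamiltonian part against $\tfrac12\tfrac{\rd}{\rd t}\mathcal{E}$ on $\mathrm{supp}\,\chi_k$, up to commutator terms with $\nabla\chi_k$ that are $O(\log^{-1}t)$-small and multiply quantities of size $|\ell|\normpro{\vec\varepsilon}^2 \lesssim t^{-1}(C^*)^2 t^{-2}\log^{-3}t$. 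Collecting everything, every surviving term is bounded by $C^* t^{-3}\log^{-3}t$ (the dominant ones coming from the $\nabla\chi_k$-commutators and the $\mathrm{Mod}_\eta$–$\mathcal{J}$ interactions), with the quadratic-in-$\normpro{\vec\varepsilon}$ terms being $O((C^*)^2 t^{-3}\log^{-3}t)$ — here I must be careful that the statement claims a bound linear in $C^*$, so the $(C^*)^2$-terms must actually carry an extra $\log^{-1}t$ or $t^{-1}$ gain (they do: they are multiplied either by $|\ell|\sim t^{-1}$ against a $t^{-2}$, or by $e^{-m|z|}$), leaving a clean $C^* t^{-3}\log^{-3}t$ bound.

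\emph{Main obstacle.} The delicate point is the bookkeeping in $\tfrac{\rd}{\rd t}\mathcal{W}$: one must verify that the localized energy correction $\mathcal{J}$, with its $t$-dependent cutoff width $\log t$, exactly cancels the non-coercive $\mathrm{Mod}_\varepsilon$-contributions to $\tfrac{\rd}{\rd t}\mathcal{E}$ coming from the $\dot\ell_k\sim t^{-2}$ drift, and that the error from moving the cutoff (the $\partial_t\chi_k$ and $\nabla\chi_k$ terms, each costing a factor $\log^{-1}t$ or $(t\log t)^{-1}$) together with the nonlinear-interaction terms $G,D$ and their derivatives all land at or below $t^{-3}\log^{-3}t$. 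Tracking the exact power of $\log t$ — and ensuring the final constant is genuinely independent of $C^*$ and linear in it — is where the real work lies; everything else is a direct application of Lemmas \ref{nonlinear-interaction-lemma}, \ref{decomposition around 2 solitons} and \ref{control of modulation equations}.
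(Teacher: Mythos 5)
Your treatment of part (i) is sound and essentially the paper's route (the paper outsources the localized coercivity of $\mathcal{E}$ to Lemma 2.4 of \cite{cte2019description}, whereas you sketch it: orthogonality \eqref{eq:orthogonal} kills the $\langle\varepsilon,\pj Q_k\rangle$ terms and the bootstrap on $a_k^{\pm}$ gives $\langle\varepsilon,Y_k\rangle^2=\big((a_k^+-a_k^-)/2\nu_0\big)^2\lesssim t^{-3}$, which is exactly where the $O(t^{-3})$ in \eqref{coercivity} comes from). The bounds $|\mathcal{J}|\lesssim|\ell|\normpro{\vec\varepsilon}^2$ and $|\mathcal{S}|\lesssim(q(|z|)+|\ell|^2)\normpro{\vec\varepsilon}$ are also as in the paper.

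Part (ii), however, contains a genuine gap: you misidentify the term that $\mathcal{J}$ is built to cancel. You propose that $2\frac{\rd}{\rd t}\mathcal{J}$ absorbs ``$\int(\Delta\varepsilon-\varepsilon)\,{\rm Mod}_\varepsilon$-type and $\langle D,\eta\rangle$-type pieces,'' and in your closing paragraph you say $\mathcal{J}$ cancels the ${\rm Mod}_\varepsilon$-contributions from the $\dot\ell_k$ drift. Neither is the point: the ${\rm Mod}_\varepsilon$ terms are already harmless by \eqref{eq:zkdot-lk} (they carry $|\dot z_k-\ell_k|\lesssim|\ell|\normpro{\vec\varepsilon}$, hence are $O((C^*)^2t^{-4}\log^{-3}t)$), and $\langle D,\eta\rangle$ is cancelled by $-2\frac{\rd}{\rd t}\mathcal{S}$ together with $\langle G,\eta\rangle$, not by $\mathcal{J}$. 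The dangerous term in $\frac{\rd}{\rd t}\mathcal{E}$ is $2\sum_{k}\langle \dot z_k\cdot\nabla Q_k,\,f(R+\varepsilon)-f(R)-f'(R)\varepsilon\rangle$, produced by the time dependence of $R$ inside the potential energy. Its size is $|\ell|\,\normo{\varepsilon}^2\sim (C^*)^2t^{-3}\log^{-3}t$ with \emph{no} extra gain in $t^{-1}$ or $\log^{-1}t$, so your final claim that every $(C^*)^2$-term carries an extra gain is false for precisely this term; estimating it would leave a bound quadratic in $C^*$ and the bootstrap on $\normpro{\vec\varepsilon}$ could not be closed. The paper's mechanism is that $\frac{\rd}{\rd t}J_k$ contains $\langle f(R+\varepsilon)-f(R),(\ell_k\cdot\nabla\varepsilon)\chi_k\rangle$, which after integration by parts yields exactly $-\langle \ell_k\cdot\nabla Q_k,\,f(R+\varepsilon)-f(R)-f'(R)\varepsilon\rangle$ up to $\nabla\chi_k$-commutators of size $(C^*)^2t^{-3}\log^{-4}t$; multiplied by $2$ this cancels the dangerous term identically. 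Without putting this cancellation at the center of the computation, the stated bound $|\frac{\rd}{\rd t}\mathcal{W}|\lesssim C^*t^{-3}\log^{-3}t$ cannot be reached.
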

\begin{proof}
Proof of (i).
From~\eqref{bootstrap} and~\eqref{est:l}, for $T_{0}$ large enough,
\begin{align*}
    |\mathcal{J}(t)| \lesssim \sum_{k=1,2}|\ell_{k}|\normpro{\vec{\varepsilon}}^2 \lesssim (C^{*})^{2}t^{-3}\log ^{-3}t \lesssim t^{-3}.
\end{align*}
Using~\eqref{eq:gbound} and~\eqref{bootstrap} for large enough $T_0,$
\begin{equation*}
|\mathcal{S}(t)|\lesssim q(|z|)\normpro{\vec{\varepsilon}} + \sum_{k=1,2}|\ell_k|^2 \normpro{\vec{\varepsilon}} \lesssim C^{*}t^{-3}\log^{-3/2}t\lesssim t^{-3}.
\end{equation*}
Therefore the coercivity property for $\mathcal{W}$ is a consequence of the following 
coercivity property for $\mathcal{E}$, there exists a constant $\nu>0$ such that,
\begin{equation}\label{coer:E}
\nu\normpro{\vec{\varepsilon}}^2 \leq \mathcal{E}(t) + \frac{t^{-3}}{\nu}. 
\end{equation}
The proof of coercivity \eqref{coer:E} follows from the coercivity property in Lemma \ref{eq:spectral properties} around one solitary wave along with \eqref{eq:orthogonal} and a localization argument. We refer to Lemma 2.4 in \cite{cte2019description} for the proof of the following estimate using \eqref{bootstrap}. Thus we can deduce \eqref{coercivity} for large enough $T_0$ depending on $C^*$.
\newline
%%%%%% Estimate for energy and momentum %%%%%%%%%%

Proof of (ii). \textbf{Step 1}. Time variation of $\mathcal{E}$. We claim
\begin{equation}\label{energy time control}
\begin{split}
\frac{\rd}{\rd t}\mathcal{E} &=2\sum_{k=1,2} \langle \ell_{k}\cdot \nabla Q_{k},f(R+\varepsilon)-f(R)-f'(R)\varepsilon\rangle\\
&\quad +2\inner{\eta}{G} +2\inner{\eta}{D}
+O((C^{*})^2t^{-4}\log ^{-3}t).
\end{split}
\end{equation}
In order to see this first observe that,
\begin{align*}
\frac{{\rm{d}}}{{\rm{d}}t}\mathcal{E} &=2\int \partial_t \varepsilon\left[ -\Delta \varepsilon + \varepsilon -f(R+\varepsilon) +f(R)\right] + \eta \partial_t \eta \\
&\quad + 2\sum_{k=1,2} \int (\dot{z}_k \cdot \nabla Q_k)[f(R+\varepsilon)-f(R)-f'(R)\varepsilon]=\mathcal{I}_1 + \mathcal{I}_2.
\end{align*}
\emph{Estimate on $\mathcal{I}_{1}$.} We claim
\begin{equation}\label{est:I1}
\mathcal{I}_{1}=2\inner{\eta}{G}+2 \inner{\eta}{D}
+O\left((C^{*})^2t^{-4}\log^{-3}t\right).
\end{equation}
Using \eqref{eq:epsilon_t and eta_t} and then \eqref{eq:orthogonal}
\begin{align*}
    \mathcal{I}_1 &=2\inner{\eta}{G} + 2 \inner{\eta}{D}+\mathcal{I}_{1,1}+\mathcal{I}_{1,2},
\end{align*}
where
\begin{equation*}
\mathcal{I}_{1,1}=-2\sum_{k=1,2} \inner{\eta}{(\ell_k\cdot \nabla )((\dot{z}_k-\ell_k)\cdot \nabla) Q_k},
\end{equation*}
\begin{equation*}
\mathcal{I}_{1,2}=2\int {\rm{Mod}}_{\varepsilon}\left(-\Delta \varepsilon + \varepsilon -f(R+\varepsilon)+f(R)\right).
\end{equation*}
From~\eqref{eq:zkdot-lk},~\eqref{bootstrap} and Cauchy-Schwarz inequality,
\begin{equation*}
\left|\mathcal{I}_{1,1}\right|\lesssim \sum_{k=1,2}|\ell_{k}|^{2}\|\vec{\varepsilon}\|_{E}^{2}\lesssim
(C^{*})^{2}t^{-4}\log ^{-3}t.
\end{equation*}
Using integration by parts and $\Delta \partial_{x_j}Q_k - \partial_{x_j}Q_k + f'(Q_k)\partial_{x_j}Q_k=0$ for $k=1,2$ and $j=1,\cdots,d$,
\begin{equation*}
\begin{aligned}
&\langle \partial_{x_j}Q_{k},-\Delta \varepsilon+\varepsilon-f(R+\varepsilon)+f(R)\rangle\\
=&-\langle \partial_{x_j} Q_{k},f(R+\varepsilon)-f(R)-f'(R)\varepsilon\rangle
-\langle \partial_{x_j}Q_{k},\left(f'(R)-f'(Q_{k})\right)\varepsilon\rangle.
\end{aligned}
\end{equation*}
Note that, for $j=1,\cdots,d$,
$$\sum_{k=1,2}|f'(R) - f'(Q_k)||\partial_{x_j} Q_k| \lesssim |Q_2||Q_1|^{p-1} + |Q_1||Q_2|^{p-1}.$$
Therefore using ~\eqref{eq:zkdot-lk}, ~\eqref{f(R+E)-f(R)-f'(R)E} and~\eqref{f'(R)-f'(Q_1)}, 
\begin{equation*}
\left|\mathcal{I}_{1,2}\right|\lesssim \sum_{k=1,2}\left|\dot{z}_{k}-\ell_{k}\right|\|\vec{\varepsilon}\|_{E}\left(\|\vec{\varepsilon}\|^2_{E}
+\|(f'(R)-f'(Q_{k}))\nabla Q_{k}\|_{L^{2}}\right)\lesssim (C^{*})^2 t^{-4}\log ^{-3}t
\end{equation*}
for $T_{0}$ large enough. Combining the above estimates we obtain \eqref{est:I1}.

\emph{Estimate on $\mathcal{I}_{2}$.} From ~\eqref{eq:zkdot-lk}, ~\eqref{f(R+E)-f(R)-f'(R)E} and~\eqref{bootstrap},
\begin{equation}\label{est:I2}
\begin{aligned}
&\bigg|\mathcal{I}_{2}-2\sum_{k=1,2} \langle \ell_{k}\cdot \nabla Q_{k},f(R+\varepsilon)-f(R)-f'(R)\varepsilon\rangle\bigg|\lesssim \sum_{k=1,2}|\dot{z}_{k}-\ell_{k}| \normo{\varepsilon}^2  \\
&\lesssim (C^{*})^2 t^{-4}\log ^{-3}t
\end{aligned}
\end{equation}
for large enough $T_0.$ Gathering~\eqref{est:I1} and~\eqref{est:I2}, we obtain~\eqref{energy time control}.

\textbf{Step 2.} Time variation of $\mathcal{J}$. We claim
\begin{equation}\label{momentum inequality}
    \begin{split}
    \frac{{\rm{d}}}{{\rm{d}}t}\mathcal{J} &=- \sum_{k=1,2}\inner{\ell_k\cdot \nabla Q_k }{f(R+\varepsilon)-f(R)-f'(R)\varepsilon}+O\big(C^{*}t^{-3}\log^{-3}t\big).
    \end{split}
\end{equation}
By direct computation and integration by parts, for $k=1,2$,
\begin{equation*}
\frac{\rm{d}}{{\rm{d}} t} J_{k}=\mathcal{I}_{k,1}+\mathcal{I}_{k,2}+\mathcal{I}_{k,3},
\end{equation*}
where
\begin{equation*}
\begin{split}
\mathcal{I}_{k,1} &=\inner{\dot{\ell}_k \cdot \nabla \varepsilon}{\eta \chi_k}+
\inner{{\ell}_k \cdot \nabla \varepsilon}{\eta (\partial_{t}\chi_k)}, \quad  \mathcal{I}_{k,2}=\inner{\ell_{k}\cdot\nabla(\partial_{t}\varepsilon)}{\eta\chi_{k}}, \\
\mathcal{I}_{k,3}&=
\inner{{\ell}_k \cdot \nabla \varepsilon}{\chi_{k}(\partial_{t}\eta)}.
\end{split}
\end{equation*}
\emph{Estimate on $\mathcal{I}_{k,1}$.} We claim
\begin{equation}\label{est:Ik1}
\left|\mathcal{I}_{k,1}\right|\lesssim (C^{*})^{2}t^{-4}\log^{-3}t.
\end{equation}
By~\eqref{bootstrap} and~\eqref{est:l}, we have 
\begin{align*}
    \left| \inner{\dot{\ell}_k \cdot \nabla \varepsilon}{\eta \chi_k}\right| \lesssim t^{-2} \normpro{\vec{\varepsilon}}^2\lesssim (C^{*})^{2}t^{-4}\log^{-3}t.
\end{align*}
From~\eqref{bootstrap},~\eqref{est:l} and~\eqref{est:derchi},
\begin{align*}
    \left|\ell_k \cdot  \inner {\nabla \varepsilon}{\eta (\partial_t  \chi_k)}  \right| \lesssim |\ell_{k}|\|\partial_{t}\chi_{k}\|_{L^{\infty}}\normpro{\vec{\varepsilon}}^2\lesssim
    (C^{*})^{2}t^{-4}\log^{-4}t.
\end{align*}
Gathering the above estimates, we obtain~\eqref{est:Ik1}.

\emph{Estimate on $\mathcal{I}_{k,2}$.}
By~\eqref{eq:epsilon_t and eta_t} and integration by parts,
\begin{equation*}
\mathcal{I}_{k,2}=-\frac{1}{2}\inner{\eta^{2}}{(\ell_{k}\cdot\nabla) \chi_{k}}
+\inner{(\ell_{k}\cdot\nabla){\rm{Mod}}_{\varepsilon}}{\eta\chi_{k}}.
\end{equation*}
Thus, from~\eqref{eq:zkdot-lk},~\eqref{bootstrap} and~\eqref{est:derchi},
\begin{equation}\label{est:Ik2}
\left|\mathcal{I}_{k,2}\right|
\lesssim |\ell_{k}|\|\nabla \chi_{k}\|_{L^{\infty}}\|\vec{\varepsilon}\|_{E}^{2}
+\sum_{k=1,2}|\ell_{k}|^{2}\|\vec{\varepsilon}\|_{E}^{2}\lesssim (C^{*})^{2}t^{-3}\log ^{-4}t.
\end{equation}

\emph{Estimate on $\mathcal{I}_{k,3}$.}
We claim
\begin{equation}\label{est:Ik3}
\mathcal{I}_{k,3}=-\inner{\ell_{k}\cdot\nabla Q_{k}}{f(R+\varepsilon)-f(R)-f'(R)\varepsilon}
+O(C^{*}t^{-3}\log^{-3}t).
\end{equation}
Now we use \eqref{eq:epsilon_t and eta_t} to get,
\begin{align*}
\mathcal{I}_{k,3}&= \inner{\Delta \varepsilon-\varepsilon}{\left(\ell_{k}\cdot\nabla \varepsilon\right) \chi_k} + \inner{G}{\left(\ell_{k}\cdot\nabla \varepsilon\right) \chi_k} + \inner{D}{\left(\ell_{k}\cdot\nabla \varepsilon\right) \chi_k}\\
& \quad +\inner{f(R+\varepsilon)-f(R)}{\left(\ell_{k}\cdot\nabla \varepsilon\right) \chi_k}+\inner{{\rm{Mod}}_{\eta}}{(\ell_{k}\cdot\nabla\varepsilon)\chi_{k}}
\end{align*}
For the first term, using integration by parts, for $j=1,\cdots,d$,
\begin{align*}
    \inner{\Delta \varepsilon-\varepsilon}{\chi_k\partial_{x_j}\varepsilon}
    &=-\frac{1}{2}\inner{(\partial_{x_j} \varepsilon)^{2}-\varepsilon^{2} }{\partial_{x_{j}} \chi_k}\\
    &\quad -\sum_{j'\ne j}\langle(\partial_{x_{j'}}\varepsilon)(\partial_{x_{j}}\varepsilon),\partial_{x_{j'}}\chi_{k} \rangle
    +\frac{1}{2}\sum_{j'\ne j}\inner{(\partial_{x_{j'}}\varepsilon)^{2}}{\partial_{x_{j}}\chi_{k}}.
\end{align*}
Thus from~\eqref{est:l}, ~\eqref{est:derchi} and ~\eqref{bootstrap},
\begin{equation*}
\left|\inner{\Delta \varepsilon-\varepsilon}{\left(\ell_{k}\cdot\nabla \varepsilon\right) \chi_k}\right|
\lesssim |\ell_{k}|  \|\nabla \chi_{k}\|_{L^{\infty}}  \|\vec{\varepsilon}\|_{E}^{2}\lesssim (C^{*})^{2}t^{-3}\log^{-4}t.
\end{equation*}
For the second term, using \eqref{eq:gbound}, \eqref{bootstrap}, \eqref{est:z} and~\eqref{est:l},
\begin{align*}
\left| \ell_k \cdot \inner{G}{\left(\nabla \varepsilon\right) \chi_k} \right| &\lesssim t^{-1} q(|z|) \normpro{\vec{\varepsilon}} \lesssim C^{*}t^{-4}\log^{-\frac{3}{2}}t.
\end{align*}
For the third term, using \eqref{bootstrap} and \eqref{est:l}
\begin{align*}
\left| \ell_k \cdot \inner{D}{\left(\nabla \varepsilon\right) \chi_k} \right| &\lesssim t^{-3}  \normpro{\vec{\varepsilon}} \lesssim C^{*}t^{-4}\log^{-\frac{3}{2}}t.
\end{align*}
For the fourth term we use integration by parts to get,
\begin{align*}
    \inner{f(R+\varepsilon)-f(R)}{( \ell_k\cdot\nabla \varepsilon)\chi_k} &= - \inner{F(R+\varepsilon)-F(R)-f(R)\varepsilon}{\ell_k\cdot\nabla \chi_k} \\
    &\quad -\inner{f(R+\varepsilon)-f(R)-f'(R)\varepsilon}{(\ell_k\cdot\nabla R)\chi_k}.
\end{align*}
Next by Taylor expansion, ~\eqref{bootstrap},~\eqref{est:l}, ~\eqref{est:derchi} and the $H^1$ sub-criticality of the exponent $p>2$ we get,
\begin{align*}
    \left|\ \inner{F(R+\varepsilon)-F(R)-f(R)\varepsilon}{\ell_k\cdot\nabla \chi_k}\right|\lesssim (C^{*})^{2}t^{-3}\log^{-4}t.
\end{align*}
Using the decay properties of $Q$ and the definition of $\chi_{k}$ we obtain,
\begin{equation*}
\begin{aligned}
&-\inner{f(R+\varepsilon)-f(R)-f'(R)\varepsilon}{(\ell_k\cdot\nabla R)\chi_k}\\
&=-\inner{\ell_{k}\cdot\nabla Q_{k}}{f(R+\varepsilon)-f(R)-f'(R)\varepsilon}
+O((C^{*})^{2}t^{-3}\log^{-4}t).
\end{aligned}
\end{equation*}
For the last term, from~\eqref{eq:zkdot-lk},~\eqref{est:l} and~\eqref{bootstrap}, we have
\begin{equation*}
\left|\inner{{\rm{Mod}}_{\eta}}{(\ell_{k}\cdot\nabla \varepsilon)\chi_{k}}\right|
\lesssim \sum_{k'=1,2}\left(|\ell_{k'}|+\|\vec{\varepsilon}\|_{E}\right)(|\ell_{k'}|\|\vec{\varepsilon}\|_{E}+|\dot{\ell}_{k'}|)\lesssim (C^{*})^{2}t^{-3}\log^{-4}t.
\end{equation*}
Gathering the above estimates for large enough $T_0$, we obtain~\eqref{est:Ik3}. We see that ~\eqref{momentum inequality} follows from ~\eqref{est:Ik1}, ~\eqref{est:Ik2} and ~\eqref{est:Ik3}.

\textbf{Step 3.} Time variation of $\mathcal{S}$. We claim the following estimate. 
\begin{equation}\label{correction term estimate}
    \frac{{\rm{d}}}{{\rm{d}}t}\mathcal{S} = \inner{G}{\eta} +\inner{D}{\eta}+ O(C^{*}t^{-4}\log^{-\frac{3}{2}}t). 
\end{equation}
From~\eqref{eq:epsilon_t and eta_t} and the definition of $\mathcal{S}$ in \eqref{defn: S},
\begin{align*}
\frac{{\rm{d}}}{{\rm{d}}t}\mathcal{S} &= \inner{\partial_t {G}}{\varepsilon} + \inner{G}{\eta} +   \inner{G}{{\rm{Mod}}_{\varepsilon}}  + \inner{\partial_t{D}}{\varepsilon} +\inner{D}{\eta} + \inner{D}{{\rm{Mod}}_{\varepsilon}} \\
& = \mathcal{S}_1 + \inner{G}{\eta} + \mathcal{S}_2 +\mathcal{S}_{3}+\inner{D}{\eta} + \mathcal{S}_{4}.
\end{align*}
\emph{Estimate on $\mathcal{S}_{1}$.} Using \eqref{est:ptGD} we get,
\begin{align*}
    |\mathcal{S}_1| = \left|\inner{\partial_t {G}}{\varepsilon}\right|  \lesssim t^{-3} \normpro{\vec{\varepsilon}}\lesssim C^{*}t^{-4}\log^{-\frac{3}{2}}t.
\end{align*}
\emph{Estimate on $\mathcal{S}_{2}$.} Using \eqref{est:z}, \eqref{mod_eta, mod_epsilon}, \eqref{eq:zkdot-lk} and \eqref{est:l} we get,
\begin{align*}
    |\mathcal{S}_2| = \left|\inner{G}{{\rm{Mod}}_{\varepsilon}}\right| \lesssim t^{-2}(|\ell_1|+|\ell_2|)\normpro{\vec{\varepsilon}} \lesssim t^{-3}\normpro{\vec{\varepsilon}}\lesssim C^{*}t^{-4}\log^{-\frac{3}{2}}t.
\end{align*}
\emph{Estimate on $\mathcal{S}_{3}$}. Using \eqref{est:ptGD} we get,
$$|\mathcal{S}_{3}| = |\inner{\partial_t{D}}{\varepsilon}| \lesssim t^{-3} \normpro{\vec{\varepsilon}}\lesssim C^{*}t^{-4}\log^{-\frac{3}{2}}t.$$
\emph{Estimate on $\mathcal{S}_{4}$}. Finally from the definition of $D$ and \eqref{est:l} we get $|D|\lesssim t^{-2}.$ Thus using, \eqref{mod_eta, mod_epsilon}, \eqref{eq:zkdot-lk} and \eqref{est:l} we get,
$$|\mathcal{S}_{4}| = \left|\inner{D}{{\rm{Mod}}_{\varepsilon}}\right|\lesssim t^{-3}\normpro{\vec{\varepsilon}} \lesssim C^{*}t^{-4}\log^{-\frac{3}{2}}t.$$ 
Combining these estimates gives us \eqref{correction term estimate}.
\newline
\textbf{Step 4.} Conclusion. From \eqref{energy time control}, \eqref{momentum inequality} and \eqref{correction term estimate} we get \eqref{time variation}.
\end{proof}
\subsection{Closing Bootstrap Estimates}
Using the estimates on the energy functional we can now improve all the estimates in \eqref{bootstrap} except the ones on $z$ and $(a_k^{-})_{k=1,2}.$
\begin{lemma} 
For $C^*>0$ large enough, for all $t\in [T^*, T_n]$ we have,
\begin{equation}\label{improved bootstrap}
    \normpro{\vec{\varepsilon}} \leq \frac{C^{*}}{2}t^{-1}\log^{-3/2}t,\quad 
        \sum_{k=1,2}|\ell_k| \leq 3 t^{-1}\quad \text{and} \quad \sum_{k=1,2} |a_k^{+}| \leq \frac{1}{2}t^{-3/2}.
\end{equation}
\end{lemma}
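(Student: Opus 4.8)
The plan is to improve the three remaining bootstrap bounds separately, smallest effort first, while tracking carefully which constants are allowed to depend on $C^{*}$. The standing rule will be: first fix $C^{*}$ depending only on the absolute constants in the coercivity and time-variation estimates, and only afterwards enlarge $T_{0}$ (which may then depend on $C^{*}$). The bounds on $z$ and on $(a_k^-)_{k=1,2}$ are not touched here, being handled respectively by integration of the geometric ODE and by the topological argument.

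First I would close the bound on $\ell_{k}$, which is immediate from Lemma~\ref{control of modulation equations}: by~\eqref{est:l} we have $\sum_{k=1,2}\big||\ell_{k}|-t^{-1}\big|\le C\,t^{-1}\log^{-1}t$ with $C$ absolute, hence $\sum_{k=1,2}|\ell_{k}|\le 2t^{-1}+C\,t^{-1}\log^{-1}t\le 3t^{-1}$ as soon as $C\log^{-1}T_{0}\le 1$.

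Next, the bound on $\vec\varepsilon$ via the energy functional $\mathcal{W}$. At the endpoint $t=T_{n}$ the initial decomposition~\eqref{intial data} gives $\vec\varepsilon(T_{n})=\vec W(a_{1,n},a_{2,n})$ with $|a_{k,n}|\le T_{n}^{-3/2}$, and since $\vec W$ is a bounded linear map (Lemma~\ref{W lemma}) we get $\normpro{\vec\varepsilon(T_{n})}\lesssim T_{n}^{-3/2}$; combined with the Taylor expansion of the $F$-terms in $\mathcal{E}$ and the bounds $|\mathcal{J}(T_{n})|+|\mathcal{S}(T_{n})|\lesssim T_{n}^{-3}$ from the proof of coercivity, this yields $\mathcal{W}(T_{n})\lesssim T_{n}^{-3}$. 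Integrating~\eqref{time variation} backward from $T_{n}$ to $t$ and using $\int_{t}^{\infty}s^{-3}\log^{-3}s\,\rd s\le \frac{1}{2}t^{-2}\log^{-3}t$, I obtain
\begin{equation*}
\mathcal{W}(t)\le \mathcal{W}(T_{n})+\int_{t}^{T_{n}}\Big|\frac{\rd}{\rd s}\mathcal{W}\Big|\,\rd s\lesssim T_{n}^{-3}+C^{*}t^{-2}\log^{-3}t\lesssim C^{*}t^{-2}\log^{-3}t
\end{equation*}
for $T_{0}$ large. Feeding this into the coercivity bound~\eqref{coercivity}, whose implicit constant is independent of $C^{*}$, and absorbing the $O(t^{-3})$ for $T_{0}$ large, gives $\normpro{\vec\varepsilon(t)}^{2}\le C_{0}C^{*}t^{-2}\log^{-3}t$, i.e. $\normpro{\vec\varepsilon(t)}\le \sqrt{C_{0}}\,\sqrt{C^{*}}\,t^{-1}\log^{-3/2}t$ with $C_{0}$ absolute; choosing $C^{*}\ge 4C_{0}$ makes $\sqrt{C_{0}}\sqrt{C^{*}}\le C^{*}/2$, which is the claimed bound.

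Finally, the bound on $a_{k}^{+}$, obtained by integrating the unstable mode backward from $T_{n}$. By~\eqref{eq:exponential_directions} together with the bootstrap bound $\normpro{\vec\varepsilon}^{2}\le (C^{*})^{2}t^{-2}\log^{-3}t$, the bound $\sum_{k}|\ell_{k}|^{2}\lesssim t^{-2}$ and $q(|z|)\lesssim t^{-2}$ from~\eqref{est:z}, we get $\big|\frac{\rd}{\rd t}a_{k}^{+}-\nu_{0}a_{k}^{+}\big|\le C_{1}t^{-2}$ for $t\ge T_{0}$ large (absorbing the $C^{*}$-dependent term into $t^{-2}$), with $C_{1}$ absolute. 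Since $\frac{\rd}{\rd t}\big(e^{-\nu_{0}t}a_{k}^{+}\big)=e^{-\nu_{0}t}\big(\frac{\rd}{\rd t}a_{k}^{+}-\nu_{0}a_{k}^{+}\big)$ and $a_{k}^{+}(T_{n})=0$ by~\eqref{intial data},
\begin{equation*}
|a_{k}^{+}(t)|=e^{\nu_{0}t}\Big|\int_{t}^{T_{n}}e^{-\nu_{0}s}\Big(\frac{\rd}{\rd s}a_{k}^{+}-\nu_{0}a_{k}^{+}\Big)\,\rd s\Big|\le C_{1}\,e^{\nu_{0}t}\int_{t}^{\infty}e^{-\nu_{0}s}s^{-2}\,\rd s\le \frac{C_{1}}{\nu_{0}}t^{-2},
\end{equation*}
so $\sum_{k=1,2}|a_{k}^{+}(t)|\le \frac{2C_{1}}{\nu_{0}}t^{-2}\le \frac{1}{2}t^{-3/2}$ once $\frac{2C_{1}}{\nu_{0}}T_{0}^{-1/2}\le\frac{1}{2}$. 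The main obstacle is the $\vec\varepsilon$-step: it is the only one that genuinely forces a choice of $C^{*}$ rather than merely of $T_{0}$, and it succeeds only because coercivity is quadratic while the drift of $\mathcal{W}$ is linear in $C^{*}$, so the output $\sqrt{C^{*}}$ beats the hypothesis $C^{*}$. This in turn relies on the extra $\log^{-3}t$ in~\eqref{time variation}, which is produced by the localized momentum correction $\mathcal{J}$ and the refined modulation law~\eqref{eq:refined lkdot} for $\dot\ell_{k}$; losing one power of $\log$ there would destroy the gain. Keeping the order of quantifiers straight ($C^{*}$ first, then $T_{0}$) is the other point to watch.
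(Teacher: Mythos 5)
Your proposal is correct and follows essentially the same route as the paper: close the $\ell_k$ bound directly from \eqref{est:l}, integrate \eqref{time variation} backward from $T_n$ (where $\mathcal{W}(T_n)=O(T_n^{-3})$ by \eqref{intial data}) and combine with the coercivity \eqref{coercivity} so that the linear-in-$C^*$ drift is beaten by the quadratic gain upon choosing $C^*$ large, and integrate the damped unstable mode from $a_k^+(T_n)=0$ via \eqref{eq:exponential_directions}. Your explicit bookkeeping of the quantifier order ($C^*$ first, then $T_0$) matches the paper's intent and is, if anything, slightly more carefully stated.
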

\begin{proof}
\textit{Estimate on} $\vec{\varepsilon}.$ For this we use the energy functional. We first note that by integrating \eqref{time variation} on the interval $[t, T_n]$ and using the initial data in \eqref{intial data} we get,
\begin{align*}
    \left|\frac{{\rm{d}}}{{\rm{d}} t}\mathcal{W}\right| \lesssim C^{*}t^{-3}\log^{-3}(t) 
    \implies |\mathcal{W}| \lesssim C^{*} t^{-2}\log^{-3}t.
 \end{align*}
Thus by the coercivity of $\mathcal{W}$ in \eqref{coercivity} we have,
\begin{align*}
\normpro{\vec{\varepsilon}}^2  \leq C_0 \left(C_1 C^{*} t^{-2}\log^{-3}t+C_2t^{-3}\right),
\end{align*}
where $C_0,C_1$ and $C_2$ are constants not depending on $C^{*}.$ Therefore for large enough $T_0$ depending on $C_1$ and $C_2$ such that $C_2 T_0^{-0.5}\leq C_1$ we get for $C^{*}\gg 1,$
\begin{align*}
\normpro{\vec{\varepsilon}}^2  \leq C_0C_1 \left( C^{*} t^{-2}\log^{-3}t+t^{-2.5}\right) \leq 2C_3 C^{*}t^{-2}\log^{-3}t
\end{align*}
where $C_3 = C_0C_1.$ Then for large enough $C^{*}$ such that $C_3C^{*}\leq \frac{(C^{*})^{2}}{8}$, the estimate on $\vec{\varepsilon}$ can be strictly improved.
\newline
\textit{Estimate on} $\ell_k.$ Using the estimate on $\ell_k$ in \eqref{est:l} we get,
\begin{align*}
    |\ell_k| \leq t^{-1}(1 + C_0\log^{-1}(t))
\end{align*}
and so for large enough $T_0$ depending on $C_0$ such that $C_0\log^{-1}(T_0)\leq \frac{1}{2}$ we get $|\ell_k|\leq  \frac{3}{2} t^{-1}$ which implies that $\sum_{k=1,2} |\ell_k| \leq 3 t^{-1}.$ 
\newline
\textit{Estimate on} $a_k^+.$
Using \eqref{eq:exponential_directions} and the initial data in \eqref{intial data} we get, 
\begin{align*}
    |a^{+}_k(t)| &\lesssim (C^{*})^2 e^{\nu_0 t}\int_{t}^{T_n} e^{-\nu_0 \tau } \tau^{-2} d\tau\leq \frac{(C^*)^2}{\nu_0} t^{-2} \leq \frac{1}{4}t^{-3/2}
\end{align*}
for $T_0$ to be large enough.
\end{proof}
\noindent
Next, we use a topological argument to close the bootstrap on the parameters $z$ and $a_k^{-}$ for $k=1,2.$ Following the strategy in \cite{cote-martel-merle}, \cite{combet2017construction} and \cite{nguyen2016existence} we will show that there exists a choice of initial data, $(a_k)_{k=1,2} \in \mathcal{B}_{\mathbb{R}^2}( T_n^{-3/2})$ and $\bar{z}>0$ such that $T^{*} = T_0.$ This argument will thus conclude the proof of Lemma \ref{uniform backward estimates}.

\begin{lemma}
There exists $n_0\geq 1$ large enough, such that for $n\geq n_0$ there exists $\bar{z}_n>0$ and $a_{k,n} \in \mathcal{B}_{\mathbb{R}^2}( T_n^{-3/2})$ for $k=1,2$ such that $T^{*} = T_0.$
\end{lemma}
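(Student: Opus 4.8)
The plan is to run a standard Brouwer-type topological shooting argument in the finite-dimensional parameter space of the unstable/marginal directions, namely $(a_1,a_2,\bar z)$. The bootstrap estimates in \eqref{bootstrap} that are \emph{not} yet closed by the energy/modulation estimates are exactly those on $z(t)$ and on $(a_k^-)_{k=1,2}$; all the others have been strictly improved in \eqref{improved bootstrap}. Therefore, at the exit time $T^*(\bar z_n,a_{1,n},a_{2,n})$, if $T^*>T_0$, one of these remaining estimates must be an equality. I would first show, via a transversality (strict outgoing) argument, that the exit can only happen through the $(a_k^-)$ boundaries, not through the $z$-boundary: differentiating the quantity $\phi(t) = (\kappa g_0)^{-1/2}|z(t)|^{\frac{d-1}{4}}e^{|z(t)|/2} - t$ and using $\ddot z \sim -2e^{-|z|}$ coming from \eqref{ldot +2g_0} together with $|\dot z_k-\ell_k|\lesssim t^{-2}\log^{-1}t$ from \eqref{est:z}, one checks that the ODE for $z$ is, to leading order, the exceptional-solution ODE whose linearization around $2\log t$ is governed by a $2\times 2$ system with one stable and one unstable mode; choosing $\bar z_n$ as a one-parameter shooting variable lets one absorb the unstable mode, so the $z$-estimate is \emph{strictly} improved for a suitable choice of $\bar z_n$ (this is where \eqref{z implies inequalities} and the precise initial condition in \eqref{intial data} on $(\kappa g_0)^{-1/2}(\bar z_n)^{(d-1)/4}e^{\bar z_n/2}-T_n$ enter). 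Consequently the exit must be through $\sum_k|a_k^-(t)|=t^{-3/2}$.

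Next I would set up the Brouwer argument on the $(a_{1,n},a_{2,n})\in\mathcal B_{\mathbb R^2}(T_n^{-3/2})$ variables (here $\mathcal B_{\mathbb R^2}(T_n^{-3/2})$ is a ball in $\mathbb R^2$, with $a_k\in\mathbb R$ in the symmetric reduction of Remark \ref{symmetric soliton}, or in $\mathbb R^{2}\times\mathbb R^{2}$ in general). The key dynamical input is \eqref{eq:exponential_directions}: writing $a_k^- = \inner{\vec\varepsilon}{\vec Z_k^-}$, one has $\frac{\rd}{\rd t}a_k^- + \nu_0 a_k^- = O(\normpro{\vec\varepsilon}^2 + \sum|\ell_k|^2 + q(|z|)) = O(t^{-2})$ on $[T^*,T_n]$, using \eqref{bootstrap} and \eqref{est:z}. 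Since $-\nu_0<0$, the linear part is \emph{stable going backward} in the sense relevant here: integrating from $T_n$ backward, $a_k^-(t) = e^{\nu_0(T_n-t)}\big(a_{k,n} - \int_t^{T_n}e^{-\nu_0(T_n-\tau)}O(\tau^{-2})\,\rd\tau\big)$, so the source term stays $\lesssim t^{-2}\ll t^{-3/2}$ and $a_k^-$ is essentially $e^{\nu_0(T_n-t)}a_{k,n}$ up to a harmless correction. Thus the flow on the rescaled variable $b_k(t) := t^{3/2}a_k^-(t)$ is strictly outgoing on the sphere $|b|=1$: $\frac{1}{2}\frac{\rd}{\rd t}|b|^2\big|_{|b|=1} = \frac{3}{2}t^{-1} - \nu_0 + O(t^{-1/2}) < 0$, i.e. $|b|^2$ is strictly \emph{decreasing} in $t$ at the boundary, which (since we integrate backward from $T_n$ toward $T_0$) is exactly the transversality needed: if $|b(T^*)|=1$ then $b$ leaves the ball and cannot re-enter.

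Finally I would conclude by contradiction: suppose that for every $a_{k,n}\in\mathcal B_{\mathbb R^2}(T_n^{-3/2})$ one has $T^*>T_0$. By the transversality just established and the fact that the $z$-exit is excluded, the map $a_{\cdot,n}\mapsto T_n^{3/2}a_{\cdot,n}^-(T^*)$ is a well-defined continuous map from the closed ball $\mathcal B_{\mathbb R^2}(1)$ (after rescaling the initial data) into its boundary sphere, and it is the identity on the boundary (since if $|T_n^{3/2}a_{\cdot,n}|=1$ already at $t=T_n$, transversality forces $T^*=T_n$). This is a continuous retraction of the ball onto its boundary, contradicting Brouwer's fixed point theorem (equivalently, the no-retraction theorem). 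Continuity of $a_{\cdot,n}^-(T^*)$ in the initial data follows from continuous dependence of the NLKG flow in $E$ on $[T_0,T_n]$ together with the $\mathcal C^1$ dependence of the decomposition parameters from Lemma \ref{decomposition around 2 solitons} and the strict transversality at $T^*$ (which makes $T^*$ itself continuous). Hence there exists $\bar z_n>0$ and $a_{k,n}\in\mathcal B_{\mathbb R^2}(T_n^{-3/2})$ with $T^*=T_0$, which is the claim. The main obstacle is the bookkeeping in the first step: verifying rigorously that the $z$-dynamics reduces to the exceptional-solution ODE with the correct sign of the linearized modes so that a \emph{single} shooting parameter $\bar z_n$ suffices to kill the unstable $z$-direction while the two-dimensional Brouwer argument handles $(a_1,a_2)$; the interplay between the $t^{-3/2}$ budget for $a_k^-$ and the $t^{-2}$ size of the source in \eqref{eq:exponential_directions} must be checked to give strict outgoing transversality with room to spare.
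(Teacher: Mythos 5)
Your overall mechanism (strict outgoing transversality at the exit time plus the no-retraction theorem) is the right one, and your treatment of the unstable directions $a_k^-$ — the rescaled variable $b_k=t^{3/2}a_k^-$, the computation $\tfrac12\tfrac{\rd}{\rd t}|b|^2\big|_{|b|=1}=\tfrac32 t^{-1}-\nu_0+O(\cdot)<0$, and the identity-on-the-boundary observation — matches the paper's argument (there $\mathcal N(t)=t^3|a^-(t)|^2$ and $\dot{\mathcal N}\le -\tfrac{3\nu_0}{2}\mathcal N+\tfrac{\nu_0}{2}\sqrt{\mathcal N}$). One aside: your remark that the source term in the Duhamel formula for $a_k^-$ "stays $\lesssim t^{-2}$" after multiplication by $e^{\nu_0(T_n-t)}$ is not correct (that factor is exponentially large backward in time); this is precisely why $a^-$ cannot be closed by direct integration, unlike $a^+$. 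It is harmless here because only the pointwise transversality at $|b|=1$ is actually used.

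The genuine gap is in your first step, the claim that the exit "can only happen through the $(a_k^-)$ boundaries, not through the $z$-boundary" after a separate one-parameter shooting in $\bar z_n$. As stated this is unjustified: whether the $z$-bootstrap saturates depends on $\bar z_n$, and the "suitable choice" of $\bar z_n$ that kills the backward-growing mode of the linearized ODE $\ddot w = 2t^{-2}w$ is itself a shooting problem whose data (the error terms in \eqref{ldot +2g_0} and \eqref{est:z}) depend on $\vec\varepsilon$ and hence on $\hat a$. You would need existence \emph{and continuity} of a selection $\bar z_n(\hat a)$ to then run a two-dimensional Brouwer argument in $\hat a$ alone, and nothing in your sketch provides this (uniqueness of the good $\bar z_n$, which would give continuity, is not established). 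The paper avoids this nested-shooting difficulty entirely: it puts the $z$-parameter into the topological argument as a third coordinate, working on the product $\mathbb D=[-1,1]\times\mathcal B_{\mathbb R^2}(1)$ with $\Psi_1(t)=(\zeta(t)-t)t^{-1}\log^{1/2}t$, and shows that the exit through the $z$-boundary is \emph{also} transversal — from $|\dot\zeta-1|\lesssim\log^{-1}t$ one gets $\dot\xi(T^*)=-2(T^*)^{-1}+O((T^*)^{-1}\log^{-1/2}T^*)<0$ at $|\Psi_1|=1$ — so that $\Psi=(\Psi_1,\Psi_2):\mathbb D\to\partial\mathbb D$ is a continuous retraction of a three-dimensional ball-like product onto its boundary, which is the contradiction. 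You should either adopt this product-space formulation or supply the missing continuous selection argument for $\bar z_n(\hat a)$; as written, the reduction to a two-dimensional Brouwer argument does not go through.
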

\begin{proof}
We prove this claim by contradiction. Let
\begin{align*}
 \zeta(t) = (\kappa g_0)^{-1/2} |z(t)|^{\frac{d-1}{4}} e^{\frac{1}{2}|z(t)|}, \xi(t) = (\zeta(t) - t)^2 t^{-2} \log t
\end{align*}
and for unstable direction $a^{-}(t) = (a^{-}_1(t), a^{-}_2(t))$ denote,
\begin{align*}
     &\mathcal{N}(t) = \sum_{k=1,2} t^{3} (a_k^{-}(t))^2 = t^3 |a^{-}(t)|^2.
\end{align*}
Suppose that for all $(\hat{\zeta}, \hat{a}) = (\hat{\zeta}, \hat{a}_1, \hat{a}_2) \in \mathbb{D} = [-1,1] \times \mathcal{B}_{\mathbb{R}^2}(1)$ the choice,
\begin{equation*}
    \zeta(T_n) = T_n + \hat{\zeta} T_n \log^{-1/2}T_n,\quad a^{-}(T_n) = \hat{a}(T_n)^{-3/2}
\end{equation*}
gives us $T^{*} = T^{*}(\hat{\zeta}, \hat{a}) \in (T_0, T_n].$ Now observe that,
\begin{align}\label{xi derivative}
\dot{\xi}(t) = 2(\zeta(t) - t)(\dot{\zeta}(t) - 1)t^{-2} \log t - (\zeta(t)-t)^2 (2t^{-3} \log t - t^{-3}).
\end{align}
In order to estimate $\dot{\xi}$ we first claim that for all $t\in (T^{*}(\hat{\zeta}, \hat{a}), T_n]$,
\begin{equation}\label{z derivative}
    \left|\frac{\rd}{\rd t}\left(|z|^{\frac{d-1}{4}} e^{\frac{1}{2}|z|}\right) - \sqrt{\kappa g_0}\right|\lesssim \log^{-1}t.
\end{equation}
Using \eqref{ldot +2g_0} we get,
\begin{align*}
    \left|\dot{\ell} \cdot \frac{z}{|z|} + 2\kappa g_0 |z|^{-\frac{d-1}{2}}e^{-|z|}\right|\lesssim t^{-2}\log^{-1}t.
\end{align*}
Similarly using \eqref{est:z} along with triangle inequality we get $\left|\dot{z}-\ell\right|\lesssim t^{-2}\log^{-1}t$ which in turn implies that
\begin{align*}
    \left|\dot{z}\cdot \frac{z}{|z|}-\ell\cdot \frac{z}{|z|}\right|\lesssim t^{-2}\log^{-1}t.
\end{align*}
Therefore using \eqref{est:l}
\begin{align*}
    \left|\left(\dot{\ell}\cdot \frac{z}{|z|} \right)\left(\ell\cdot \frac{z}{|z|}\right) + 2\kappa g_0 \dot{z}\cdot \frac{z}{|z|}|z|^{-\frac{d-1}{2}}e^{-|z|}\right|\lesssim t^{-3}\log^{-1}t.
\end{align*}
Thus using the initial data in \eqref{intial data} we can integrate on the interval $[t, T_n]$ where $t\in [T^{*}(\hat{\zeta}, \hat{a}),T_n]$ to get (when $d-1>0$)
\begin{align*}
    \left|\frac{1}{2}\left(\ell\cdot \frac{z}{|z|}\right)^2 - 2\kappa g_0 |z|^{-\frac{d-1}{2}}e^{-|z|} \right|\lesssim t^{-2}\log^{-1}t
\end{align*}
and when $d-1=0,$
\begin{align*}
     \left|\left(\dot{\ell}\cdot \frac{z}{|z|} \right)\left(\ell\cdot \frac{z}{|z|}\right) + 2\kappa g_0 \dot{z}\cdot \frac{z}{|z|}e^{-|z|}\right| &\lesssim t^{-3}\log^{-1}t\\
     \implies \left|\frac{1}{2}\left(\ell\cdot \frac{z}{|z|}\right)^2 - 2\kappa g_0 e^{-|z|}\right| &\lesssim t^{-2}\log^{-1}t.
\end{align*}
Thus we have,
\begin{align*} 
    \left|\left(\ell\cdot \frac{z}{|z|}\right) - \sqrt{4\kappa g_0} |z|^{-\frac{d-1}{4}}e^{-\frac{1}{2}|z|} \right| + \left| \left(\dot{z}\cdot \frac{z}{|z|}\right)-\left(\ell\cdot \frac{z}{|z|}\right)\right| \lesssim t^{-1}\log^{-1}t
\end{align*}
which implies,
\begin{align*}
    \left|\left(\dot{z}\cdot \frac{z}{|z|}\right) - \sqrt{4\kappa g_0} |z|^{-\frac{d-1}{4}}e^{-\frac{1}{2}|z|} \right| \lesssim t^{-1}\log^{-1}t.
\end{align*}
Next for $d-1>0,$
\begin{align*}
\frac{\rd}{\rd t}\left(|z|^{\frac{d-1}{4}} e^{\frac{1}{2}|z|}\right) = \frac{1}{2}\dot{z}\cdot \frac{z}{|z|} |z|^{\frac{d-1}{4}} e^{\frac{1}{2}|z|} + \frac{d-1}{4}\dot{z}\cdot \frac{z}{|z|} |z|^{\frac{d-1}{4}-1}e^{\frac{1}{2}|z|}
\end{align*}
and $d-1=0,$
\begin{align*}
    \frac{\rd}{\rd t}(e^{\frac{1}{2}|z|}) = \frac{1}{2}\dot{z}\cdot \frac{z}{|z|} e^{\frac{1}{2}|z|}.
\end{align*}
Thus,
\begin{align*}
    \left|\frac{\rd}{\rd t}\left(|z|^{\frac{d-1}{4}} e^{\frac{1}{2}|z|}\right) - \sqrt{\kappa g_0}\right|\lesssim \log^{-1}(t)+ \frac{d-1}{4} |\dot{z}||z|^{\frac{d-1}{4}-1} e^{\frac{1}{2}|z|}\lesssim  \log^{-1}t.
\end{align*}
In other words we have,
\begin{equation}\label{zeta derivative}
    \left|\dot{\zeta}(t) - 1 \right| \lesssim \log^{-1}t.
\end{equation}
The above inequality will allow us to estimate $\dot{\xi}.$ On the other hand, to estimate $\dot{\mathcal{N}}$ we use \eqref{bootstrap} and \eqref{eq:exponential_directions}, to get that for all $t\in (T^{*}(\hat{\zeta}, \hat{a}), T_n]$
\begin{equation}
\begin{split}
\dot{\mathcal{N}}(t) &= \sum_{k=1,2} t^{3} \left(3t^{-1} a_k^{-}(t) + 2\frac{da_k^{-} }{dt}(t) \right)a_k^{-}(t) \\
&= \sum_{k=1,2} t^{3}  \left(3t^{-1}   - 2\nu_0\right) a_k^{-}(t)^2 +  O\left(\normpro{\vec{\varepsilon}}^2 t^{3}|a_k^{-}(t)| + t|a_k^{-}(t)|\right) \\
&\leq \left(3t^{-1}   - 2\nu_0\right) \mathcal{N}(t) + C t^{-1/2} ((C^*)^2 \log^{-3}(t) + 1)\sqrt{\mathcal{N}(t)},
\end{split}
\end{equation}
where $C>0$ is a constant. Thus for $T_0$ large enough depending on $C$ and $C^{*},$
\begin{equation}\label{N dot estimate}
    \dot{\mathcal{N}}(t) \leq -\frac{3\nu_0}{2}\mathcal{N}(t) + \frac{\nu_0}{2}\sqrt{\mathcal{N}(t)}.
\end{equation}
Let,
\begin{equation*}
    \Psi_1(t) = (\zeta(t) - t) t^{-1} \log^{1/2}t,\quad 
    \Psi_{2}(t) = (a^{-}_1(t) t^{3/2}, a^{-}_2(t)t^{3/2})  = a^{-}(t)(t)^{3/2}.
\end{equation*}
From the definition of $T^{*}$ and continuity of the flow, at the limit $T^{*}(\hat{\zeta}, \hat{a})$ we have one of the following situation
\begin{align*}
\Psi_1(T^{*}(\hat{\zeta}, \hat{a})) &= \pm 1, \quad \Psi_{2}  \in \mathcal{B}_{\mathbb{R}^2}(1)\\
|\Psi_{2}(T^{*}(\hat{\zeta}, \hat{a}))| &= 1\iff\Psi_{2} \in \partial \mathcal{B}_{\mathbb{R}^2}(1) , \quad \Psi_1 \in [-1,1].
\end{align*}
where $\partial \mathcal{B}_{\mathbb{R}^2}(1)$ is the boundary of the closed ball $\mathcal{B}_{\mathbb{R}^2}(1).$ In the first case using \eqref{xi derivative} and \eqref{zeta derivative} we get,
\begin{align*}
    \left|\dot{\xi}\left(T^{*}(\hat{\zeta}, \hat{a})\right)+2(T^{*}(\hat{\zeta}, \hat{a}))^{-1}\right| \lesssim \left(T^{*}(\hat{\zeta}, \hat{a})\right)^{-1}\log^{-1/2}\left(T^{*}(\hat{\zeta}, \hat{a})\right)
\end{align*}
and so
\begin{align*}
    \dot{\xi}(T^{*}(\hat{\zeta}, \hat{a})) < - T^{*}(\hat{\zeta}, \hat{a})^{-1} < 0
\end{align*}
for large enough $T_0$ depending on $C^*$ whereas in the second case we have that $\mathcal{N}(T^{*}(\hat{\zeta}, \hat{a}))=1$ and so from \eqref{N dot estimate} we get, 
\begin{align*}
\mathcal{\dot{N}}_k(T^{*}(\hat{\zeta}, \hat{a})) \leq -\frac{1}{2}\nu_0 <0.
\end{align*}
This transversality property implies the continuity of the map $(\hat{\zeta}, \hat{a})\to T^{*}((\hat{\zeta}, \hat{a}))$ and hence the following map,
\begin{align*}
    \Psi: \mathbb{D} &\to \partial \mathbb{D}\\
        (\hat{\zeta}, \hat{a}) &\to (\Psi_1(T^{*}(\hat{\zeta}, \hat{a})),  \Psi_{2}(T^{*}(\hat{\zeta}, \hat{a}))
\end{align*}
is also continuous where $\partial \mathbb{D}$ denotes the boundary of $\mathbb{D}.$ Note that if $\hat{a} \in \partial \mathcal{B}_{\mathbb{R}^2}(1)$ then \eqref{N dot estimate} implies that $\dot{\mathcal{N}}(T_n)<0,$ we have $T^{*}(\hat{\zeta}, \hat{a}) = T_n$ and if $\hat{\zeta} = \pm 1$ then 
from \eqref{zeta derivative} we get $\dot{\xi}(T_n)<0$ and $T^{*}(\hat{\zeta}, \hat{a}) = T_n.$ Thus $\Psi(\hat{\zeta}, \hat{a}) = (\hat{\zeta}, \hat{a})$ for all $(\hat{\zeta}, \hat{a}) \in \partial \mathbb{D}.$ Therefore the restriction of $\Psi$ to the boundary of $\mathbb{D}$ is the identity. However this contradicts the no retraction theorem. Thus there exists initial data $(\bar{z}, a_k)$ for $k=1,2$ such that $T^{*} = T_0.$ 
\end{proof}
\section{Compactness Argument}
\subsection{Construction of a sequence of backwards solutions.}
Recall the following lemma:
\begin{lemma}\label{weak limit of solution}
The {\rm{(NLKG)}} flow is continuous for the weak $H^1\times L^2$ topology. More precisely, let $\vec{u}_n\in \mathcal{C}([0, T], H^1\times L^2)$ be a sequence of solutions to the {\rm{(NLKG)}} and assume that for some $M>0,$
\begin{align*}
    \vec{u}_n(0) \rightharpoonup \vec{u}^{*}\text{ in }H^1\times L^2\text{-- weak, and}\quad \forall n,\quad \norm{\vec{u}_n(t)}_{\mathcal{C}([0, T], H^1\times L^2)} \leq M. 
\end{align*}
Define $\vec{u}\in \mathcal{C}([0, T^{+}(\vec{u})), H^1\times L^2)$ be the solution to the {\rm{(NLKG)}} with initial data $\vec{u}(0) = \vec{u}^{*}.$ Then $T^{+}(\vec{u}) > T$ and
\begin{align*}
    \forall t\in [0,T], \quad \vec{u}_n(t)\rightharpoonup \vec{u}(t) \text{ in } H^1\times L^2 \text{- weak}.
\end{align*}
\end{lemma}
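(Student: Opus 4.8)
The plan is to combine local-in-space compactness with the uniqueness theory for \eqref{nlkg} in the energy space. Write $\vec u_n=(u_n,v_n)$. First I would observe that weak lower semicontinuity of the norm gives $\|\vec u^{*}\|_{H^1\times L^2}\le M$, so by the local Cauchy theory of Ginibre--Velo the solution $\vec u$ with data $\vec u^{*}$ exists on a nontrivial interval; the content of the lemma is that it survives up to $T$ and is the weak limit of the $\vec u_n$. Note that a priori we only know the uniform bound $M$ for the sequence, not for $\vec u$ itself; that gap is exactly what the final uniqueness step will close.

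The core step is a compactness argument. From \eqref{nlkg system} and the bound $M$ one gets $\partial_t u_n=v_n$ bounded in $L^\infty([0,T];L^2)$ and $\partial_t v_n=\Delta u_n-u_n+f(u_n)$ bounded in $L^\infty([0,T];H^{-1})$, where boundedness of $f(u_n)=|u_n|^{p-1}u_n$ in $H^{-1}$ uses precisely the $H^1$-subcriticality $p\le(d+2)/(d-2)$ (dually, $L^{2d/((d-2)p)}\hookrightarrow H^{-1}$), and is immediate for $d\le 2$. Hence $\{\vec u_n\}$ is bounded in $C^{0,1}([0,T];H^{-1}_{\mathrm{loc}}\times H^{-2}_{\mathrm{loc}})$ and, by Rellich, relatively compact in space on each ball. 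An Aubin--Lions / Arzel\`a--Ascoli argument then produces a subsequence (not relabelled) and $\vec w\in L^\infty([0,T];H^1\times L^2)$ with
\begin{equation*}
\vec u_n\longrightarrow \vec w \ \text{ in } C\bigl([0,T];L^2_{\mathrm{loc}}\times H^{-1}_{\mathrm{loc}}\bigr), \qquad \vec u_n(t)\rightharpoonup \vec w(t) \ \text{ in } H^1\times L^2 \ \text{ for every } t\in[0,T],
\end{equation*}
the second assertion following from the first together with the uniform $H^1\times L^2$ bound at fixed $t$. Interpolating that bound with the local $L^2$ convergence gives $u_n\to w$ strongly in $L^q_{\mathrm{loc}}$ for every $q<2^{*}=2d/(d-2)$; since $p<2^{*}$, the family $|u_n|^{p-1}u_n$ is uniformly integrable on compact sets and converges a.e., so $f(u_n)\to f(w)$ in $L^1_{\mathrm{loc}}$ by Vitali. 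Passing to the limit in \eqref{nlkg} then shows $\vec w$ is a distributional solution on $[0,T]$ lying in $L^\infty_t(H^1\times L^2)$, with $\vec w(0)=\vec u^{*}$ (the strong $L^2_{\mathrm{loc}}$ limit of $\vec u_n(0)$ must agree with the weak limit $\vec u^{*}$).

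The final step is to invoke unconditional uniqueness: a distributional solution of \eqref{nlkg} in $L^\infty([0,T];H^1\times L^2)$ coincides, after modification on a null set, with the solution furnished by the Cauchy theory (the difference estimate closing in the energy class for $H^1$-subcritical $p$). In particular $\vec u$ does not blow up before $T$, so $T^{+}(\vec u)>T$ and $\vec w=\vec u$ on $[0,T]$. Since this identification is independent of the chosen subsequence, every subsequence of $\{\vec u_n\}$ has a further subsequence converging to $\vec u$ in $C([0,T];L^2_{\mathrm{loc}}\times H^{-1}_{\mathrm{loc}})$, hence the full sequence does, and therefore $\vec u_n(t)\rightharpoonup \vec u(t)$ in $H^1\times L^2$ for every $t\in[0,T]$.

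I expect the main obstacle to be two-fold: extracting the genuine (local, strong) spatial compactness that makes the nonlinear limit pass work — which relies on the time-equicontinuity read off from the equation — and the concluding identification, which rests on unconditional uniqueness of energy-class solutions of \eqref{nlkg}; this is the place where the subcriticality hypothesis \eqref{on:p} is used decisively, and it is also what promotes the a priori bound from the sequence to the limit solution $\vec u$.
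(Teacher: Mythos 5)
The paper itself does not prove this lemma; it cites Lemma 10 of \cite{cte2012multisolitons}, whose argument runs through the Duhamel formula and the local Cauchy theory on short time intervals of length depending only on $M$. Your route (uniform bounds, Aubin--Lions/Rellich compactness, passage to the limit in the nonlinearity, identification of the limit) is a legitimate alternative, and the compactness and limit-passing steps are sound, including the use of subcriticality to bound $f(u_n)$ in $H^{-1}$ and the Vitali argument for $f(u_n)\to f(w)$ in $L^1_{\mathrm{loc}}$.

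The genuine gap is the final identification step, where you invoke \emph{unconditional} uniqueness: that a distributional solution of \eqref{nlkg} lying merely in $L^{\infty}([0,T];H^1\times L^2)$ must coincide with the Ginibre--Velo solution. This is not a routine fact over the whole range \eqref{on:p}. The elementary difference estimate in the energy class closes only when $f$ is locally Lipschitz from $H^1$ to $L^2$, which by H\"older and Sobolev requires $(p-1)d\le \tfrac{2d}{d-2}$, i.e. $p\le \tfrac{d}{d-2}$. That covers $d\le 2$ (any $p$) and $d=3$ with $p\le 3$, but fails for $d=3$ with $3<p<5$ and for \emph{every} admissible $p>2$ when $d=4,5$. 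In those cases the uniqueness provided by \cite{local-wellposedness} is \emph{conditional}: it holds in the class $C_tH^1\cap L^q_tL^r_x$ for a suitable Strichartz pair, and your limit $\vec w$ is only known to lie in $L^\infty_t(H^1\times L^2)$, so the theorem you appeal to does not apply to it; genuine unconditional uniqueness for energy-subcritical NLKG in dimensions $4$ and $5$ is a delicate matter, not a black box. The gap is repairable: iterating the local well-posedness on intervals of length $\tau(M)$ gives a uniform bound $\|u_n\|_{L^q([0,T];L^r)}\le C(M,T)$, which passes to the limit by weak lower semicontinuity and places $\vec w$ in the conditional uniqueness class; this simultaneously yields $T^{+}(\vec u)>T$. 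Alternatively, one can bypass uniqueness of rough solutions entirely by arguing as in the cited reference, pairing the Duhamel formulas for $\vec u_n$ and $\vec u$ against a fixed test function on $[0,\tau(M)]$ and iterating in time.
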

\begin{proof}
See proof of Lemma 10 in  \cite{cte2012multisolitons}.
\end{proof}
We can now finish the proof of Theorem \ref{main theorem}. From Proposition \ref{uniform backward estimates} there exists a sequence of final data functions $\vec{u}_{0,n}\in H^{1}\times L^2$ such that,
\begin{equation*}
    \forall t\in [T_0,T_n], \quad \vec{u}_n(t) = \phi(T_n, t, \vec{u}_{0,n}) 
\end{equation*}
where $\phi = (u, \partial_t u)^T$ is the flow of \eqref{nlkg}. Note that $T_0$ does not depend on $T_n$ and that there exists $M>0$ independent of $n$ such that 
$$\forall t\in [T_0, T_n],\quad \normpro{\vec{u}_n(t) - \vec{R}_n(t)}\leq M t^{-1}\log^{-3/2}t.$$
Let $\vec{u}^*_0$ be a weak limit in $H^1\times L^2$ of the bounded sequence $\vec{u}_n(T_0)$ up to a subsequence extraction and define,
\begin{equation*}
    \vec{u}^{*}(t) = \phi(t, T_0, \vec{u}^{*}_0).
\end{equation*}
Fix $t\geq T_0.$ Then using Lemma \ref{weak limit of solution} on $[T_0, t]$ we get $T^{+}(\vec{u}^*)>t$ and $\vec{u}_n(t)\rightharpoonup \vec{u}^{*}(t)$ weakly in $H^1\times L^2.$ Moreover, note that the estimates \eqref{est:z} and \eqref{est:l} provide uniform bounds on the time derivatives of the geometric parameters $\left({z}_{k,n}(t), \ell_{k, n}(t)\right)_{k=1,2}$ on the interval $[T_0, T_n].$ Therefore by Ascoli Lemma we get uniform convergence as $n\to +\infty,$ 
\begin{align*}
   \left({z}_{k,n}(t), \ell_{k, n}(t)\right)_{k=1,2}\to  ({{z}}_{k}(t), {\ell}_{k}(t))_{k=1,2}
\end{align*}
on compact subsets of $[T_0,+\infty)$ up to a subsequence extraction for some continuous functions $({{z}}_{k}(t), {\ell}_{k}(t))_{k=1,2}.$  Thus as $n\to +\infty$,
\begin{align*}
\vec{\varepsilon}_n(t) \rightharpoonup \vec{\varepsilon}(t)\text{ in } H^1\times L^2\text{- weak}
\end{align*}
for $t\in [T_0,+\infty).$ Therefore for $t\in [T_0,+\infty)$, using the uniform estimates due to Proposition \ref{uniform backward estimates} we get,
\begin{equation}
    ||z(t)| - 2\log t|\lesssim \log \log t, \quad |\ell(t)|\lesssim t^{-1},\quad \normpro{\vec{\varepsilon}(t)} \lesssim t^{-1}\log^{-3/2}t,
\end{equation}
which imply that $|z(t)| = 2(1+o(1))\log(t)$ as $t\to +\infty.$ Finally, using the relation $\partial_t u = \eta - \sum_{k=1,2}(\ell_k\cdot \nabla )Q_k$,
\begin{align*}
    \normo{{u}_n(t) - \sum_{k=1,2}{Q}\left(\cdot -z_{k,n}(t)\right)} + \normt{\partial_t u_n(t)} \lesssim \normpro{\vec{\varepsilon}_n(t)} + \sum_{k=1,2}|\ell_{k,n}(t)|  \lesssim t^{-1}
\end{align*}
and passing to the limit as $n\to +\infty$ we get \eqref{main-estimate} which concludes the proof of Theorem \ref{main theorem}.
 
\subsection*{Acknowledgements} This work is finished under the guidance of Yvan~Martel and Xu~Yuan to whom the author owes great gratitude.

\end{document}